\documentclass[12pt]{amsart}
\def\color[#1]#2{}

\usepackage{color}

\tolerance = 10000

\oddsidemargin .2cm \evensidemargin .2cm \textwidth=16.5cm
\textheight=22truecm 

\parskip 2mm


\DeclareMathOperator{\Sz}{Sz}
\DeclareMathOperator{\aq}{\alpha_q}
\DeclareMathOperator{\bq}{\beta_q}
\DeclareMathOperator{\aqp}{\alpha_{q'}}
\DeclareMathOperator{\bqp}{\beta_{q'}}
\DeclareMathOperator{\PSL}{PSL}
\DeclareMathOperator{\PGL}{PGL}
\DeclareMathOperator{\PGammaL}{P\Gamma L}
\DeclareMathOperator{\Ree}{R}
\DeclareMathOperator{\Reeb}{^2G_2}

\newcommand{\cal}{\mathcal}

\newtheorem{theorem}{Theorem}

\newtheorem{lemma}{Lemma}

\newcommand{\Aut}{\mathrm{Aut}}

\parindent15pt
\parskip\medskipamount

\title[Ree groups in characteristic 3 acting on polytopes]{Groups of Ree type in characteristic 3 acting on polytopes}
\author{Dimitri Leemans}
\address{D. Leemans\\
 Department of Mathematics\\University of Auckland\\
  Private Bag 92019\\ Auckland, New Zealand}
\email{d.leemans@auckland.ac.nz}
\author{Egon Schulte}
\address{E. Schulte\\Northeastern University,
Department of Mathematics,
360 Huntington Avenue,
Boston, MA 02115, USA}
\email{schulte@neu.edu}

\author{Hendrik Van Maldeghem}
\address{H. Van Maldeghem, Vakgroep Wiskunde
Universiteit Gent,
Krijgslaan 281, S22,
9000 Gent, Belgium}
\email{hvm@cage.ugent.be}

\date{\today}
\begin{document}
\maketitle
\begin{abstract}
Every Ree group $\Ree(q)$, with $q\neq 3$ an odd power of 3, is the automorphism group of an abstract regular polytope, and any such polytope is necessarily a regular polyhedron (a map on a surface). However, an almost simple group $G$ with $\Ree(q) < G \leq \mathsf{Aut}(\Ree(q))$ is not a C-group and therefore not the automorphism group of an abstract regular polytope of any rank. 
\end{abstract}

\section{Introduction}
Abstract polytopes are certain ranked partially ordered sets. A polytope is called ``regular'' if its automorphism group acts (simply) transitively on (maximal) flags. It is a natural question to try to classify all pairs $(\mathcal{P},G)$, where $\mathcal{P}$ is a regular polytope and $G$ is an automorphism group acting transitively on the flags of $\mathcal{P}$.  An interesting subclass is constituted by the pairs $(\mathcal{P},G)$ with $G$ almost simple, as then a lot of information is available about the maximal subgroups, centralizers of involutions, etc., of these groups, making classification possible. Potentially this could also lead to new presentations for these groups, as well as a better understanding of some families of such groups using geometry.

The study of polytopes arising from families of almost simple groups has received a lot of attention in recent years and has been very successful. We particularly refer to~\cite{Lee2005a} for almost simple groups of Suzuki type (see also~\cite{KL2009}); ~\cite{LS2005a, LS2008a,CDL2013} for groups $\PSL_2(q) \leq G \leq \PGammaL_2(q)$; ~\cite{BV2010} for groups $\PSL_3(q)$ and $\PGL_3(q)$; ~\cite{FL2011} for symmetric groups; ~\cite{FLM2012b,FLM2012} for alternating groups; and~\cite{HH2010,LM2011,LV2005} for the sporadic groups up to, and including, the third Conway group Co$_3$, but not the O'Nan group. Recently, Connor and Leemans have studied the rank $3$ polytopes of the O'Nan group using character theory~\cite{CL2013}, and Connor, Leemans and Mixer have classified all polytopes of rank at least 4 of the O'Nan group~\cite{CLM2014}.

Several attractive results were obtained in this vein, including, for instance, the proof that Coxeter's 57-cell and Gr\"unbaum's 11-cell are the only regular rank $4$ polytopes with a full automorphism group isomorphic to a group $\PSL_2(q)$ (see~\cite{LS2005a}). Another striking result is the discovery of the universal locally projective $4$-polytope of type $\{\{5,3\}_5,\{3,5\}_{10}\}$, whose full automorphism group is $J_1\times \PSL_2(19)$ (see \cite{HL2003}); this is based on the classification of all regular polytopes with an automorphism group given by the first Janko group $J_1$.

The existing results seem to suggest that polytopes of arbitrary high rank are difficult to obtain from a family of almost simple groups. Only the alternating and symmetric groups are currently known to act on abstract regular polytopes of arbitrary rank. For the sporadic groups the highest known rank is 5.

The Ree groups $\Ree(q)$, with $q=3^{2e+1}$ and $e>0$, were discovered by Rimhak Ree~\cite{Ree1960} in 1960. In the literature they are also denoted by $\Reeb(q)$. These groups have a subgroup structure quite similar to that of the Suzuki simple groups $\Sz(q)$, with $q=2^{2e+1}$ and $e>0$.  Suzuki and Ree groups play a somewhat special role in the theory of finite simple groups, since they exist because of a Frobenius twist, and hence have no counterpart in characteristic zero. Also, as groups of Lie-type, they have rank 1, which means that they act doubly transitively on sets without further structure. However, the rank 2 groups which are used to define them, do impose some structure on these sets. For instance, the Suzuki groups act on ``inversive planes''. For the Ree groups, one can define a geometry known as a ``unital''. However, these unitals, called \emph{Ree unitals}, have a very complicated and little accessible geometric structure (for instance, there is no geometric proof of the fact that the automorphism group of a Ree unital is an almost simple group of Ree type; one needs the classification of doubly transitive groups to prove this). Also, Ree groups seem to be misfits in a lot of general theories about Chevalley groups and their twisted analogues. For instance, there are no applications yet of the Curtis-Tits-Phan theory for Ree groups; all finite quasisimple groups of Lie type are known to be presented by two elements and 51 relations, except the Ree groups in characteristic 3~\cite{GKK2011}. Hence it may be clear that the Ree groups $\Ree(q)$, with $q$ a power of 3, deserve a separate treatment when investigating group actions on polytopes.

Now, the regular polytopes associated with Suzuki groups are quite well understood (see~\cite{KL2009}, \cite{Lee2005a}). But the  techniques used for the Suzuki groups are not sufficient for the Ree groups. In the present paper, we carry out the analysis for the groups $\Ree(q)$. In particular, we ask for the possible ranks of regular polytopes whose automorphism group is such a group, and we prove the following theorem.

\begin{theorem}\label{maintheo} 
Among the almost simple groups $G$ with $\Ree(q) \leq G \leq \mathsf{Aut}(\Ree(q))$ and $q = 3^{2e+1}\neq 3$, only the Ree group $\Ree(q)$ itself is a C-group. In particular, $\Ree(q)$ admits a representation as a string C-group of rank $3$, but not of higher rank. Moreover, the non-simple Ree group $\Ree(3)$ is not a C-group.
\end{theorem}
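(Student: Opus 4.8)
The plan is to treat the statements of Theorem~\ref{maintheo} separately, in increasing order of difficulty, using throughout that the Sylow $2$-subgroups of $\Ree(q)$ are elementary abelian of order $8$, that involution centralisers are $\cong 2\times\PSL_2(q)$, Kleidman's classification of the maximal subgroups of $\Ree(q)$, and the known results on regular polytopes for groups of type $\PSL_2$.

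\emph{The overgroups and $\Ree(3)$.} Both follow from one remark: as $q=3^{2e+1}$, the outer automorphism group of $\Ree(q)$ is cyclic of the \emph{odd} order $2e+1$ (field automorphisms only), and likewise $\Ree(3)\cong\PSL_2(8){:}3$ has $\PSL_2(8)$ normal of odd index. So in any $G$ with $\Ree(q)<G\le\Aut(\Ree(q))$, and in $G=\Ree(3)$, every involution maps to $1$ in the odd cyclic quotient by the socle; hence all involutions of $G$ lie in the socle, $G$ is not generated by involutions, $G$ is not a C-group of any rank, and a fortiori $G$ is not the automorphism group of a regular polytope.

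\emph{A rank-$3$ representation of $\Ree(q)$.} For each $q\ne 3$ I would exhibit involutions $\rho_0,\rho_1,\rho_2$ with $(\rho_0\rho_2)^2=1$, e.g. inside the $7$-dimensional representation of $\Ree(q)$ over a field of characteristic $3$ or in the doubly transitive action on the $q^3+1$ points of the Ree unital. One chooses a Klein four-group $V=\langle\rho_0,\rho_2\rangle$ and an involution $\rho_1$ so that the dihedral groups $\langle\rho_0,\rho_1\rangle$ and $\langle\rho_1,\rho_2\rangle$ have rotation orders that can be computed and are coprime, whence $\langle\rho_0,\rho_1\rangle\cap\langle\rho_1,\rho_2\rangle$ divides a group of order $2$, so equals $\langle\rho_1\rangle$ and the rank-$3$ intersection property holds automatically; and so that the three generators jointly involve element orders unavailable in any point stabiliser, in $2\times\PSL_2(q)$, in a Klein-four normaliser, in a torus normaliser $C_{q\pm\sqrt{3q}+1}{:}C_6$, or in a subfield subgroup $\Ree(q_0)$, which by Kleidman's list forces $\langle\rho_0,\rho_1,\rho_2\rangle=\Ree(q)$.

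\emph{No representation of rank $\ge 4$.} In a string C-group $(G,\{\rho_0,\dots,\rho_{n-1}\})$ with $n\ge 4$, the generators $\rho_2,\dots,\rho_{n-1}$ centralise $\rho_0$ and, by the intersection property, meet $\langle\rho_0\rangle$ trivially, so $\langle\rho_2,\dots,\rho_{n-1}\rangle$ embeds into $C(\rho_0)/\langle\rho_0\rangle\cong\PSL_2(q)$ as a string C-group of rank $n-2$. But $\PSL_2(3^{2e+1})$ has no string C-group subgroup of rank $\ge 4$: by induction over subfields one need only rule this out for $\PSL_2(3^{2e+1})$ itself (impossible since $3^{2e+1}\notin\{11,19\}$), for dihedral groups (rank $\le 2$) and for $A_4\cong\PSL_2(3)$ (not a C-group), the Borel subgroups having odd order and subgroups of the form $\PGL_2(q_0)$ or copies of $S_4$ or $A_5$ not occurring in $\PSL_2(3^{2e+1})$ when $2e+1$ is odd. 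Hence $n\le 5$, and it remains to exclude $n=4,5$. In both cases the facet group $\langle\rho_0,\dots,\rho_{n-2}\rangle$ and the vertex-figure group $\langle\rho_1,\dots,\rho_{n-1}\rangle$ are proper string C-groups of rank $n-1\ge 3$, so each lies in a maximal subgroup; by the above and Kleidman's list the only maximal subgroups containing a string C-group of rank $\ge 3$ are the involution centralisers $2\times\PSL_2(q)$, the Klein-four normalisers, and the subfield subgroups $\Ree(q_0)$ (the point stabilisers and torus normalisers have Sylow $2$-subgroup of order $2$). For $n=5$ the commuting relations give more: $\langle\rho_0,\rho_1\rangle$ and $\langle\rho_3,\rho_4\rangle$ centralise one another and intersect trivially, so their direct product of two dihedral groups embeds in $\Ree(q)$, forcing their $2$-ranks to sum to at most $3$ and each to embed in $\PSL_2(q)$, and the two facet groups $\langle\rho_0,\rho_1,\rho_2\rangle$, $\langle\rho_2,\rho_3,\rho_4\rangle$ embed in copies of $\PSL_2(q)$ inside involution centralisers; a short case check then excludes $n=5$. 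The case $n=4$ is the real obstacle: there the commuting constraints are weakest, and one genuinely needs a precise description of which regular polyhedra occur as subgroups of $\Ree(q)$ and of how such a subgroup $\langle\rho_0,\rho_1,\rho_2\rangle$ can be centralised on $\langle\rho_0,\rho_1\rangle$ by a further involution $\rho_3$, in order to show that it is never the facet of a rank-$4$ polytope with automorphism group $\Ree(q)$; the finitely many small $q$ not covered generically would be finished by a direct computation in \textsc{Magma}.
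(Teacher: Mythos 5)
Your treatment of the overgroups and of $\Ree(3)$ is exactly the paper's Lemma~\ref{lemma1}: the quotient by the socle is cyclic of odd order, so every involution lies in the socle and neither a proper overgroup nor $\Ree(3)$ is generated by involutions. Your rank bound is also in the right spirit, but note that the paper gets $n\le 4$ in one step: for $n\ge 5$ the subgroup $\langle\rho_0,\rho_1,\rho_{n-2},\rho_{n-1}\rangle$ is a direct product of two non-abelian dihedral groups, and no maximal subgroup of $\Ree(q)$ contains such a product. Your embedding of $\langle\rho_2,\dots,\rho_{n-1}\rangle$ into $C_G(\rho_0)/\langle\rho_0\rangle\cong\PSL_2(q)$ only yields $n\le 5$, and your disposal of $n=5$ via $2$-ranks does not close: if both dihedral factors have odd rotation order, their product has $2$-rank $2$, so the $2$-rank $3$ of $\Ree(q)$ gives no contradiction and the ``short case check'' is doing real, unspecified work. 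Your rank-$3$ existence argument follows the same strategy as Lemma~\ref{lemma4} (coprime rotation orders give the intersection property; maximal-subgroup considerations give generation), but you never actually produce the involutions, whereas the paper does: $\rho_0,\rho_1$ in a torus normalizer $N_G(A_3)$ with $\rho_0\rho_1$ of order $q+1+3^{e+1}$, and $\rho_2\in C_G(\rho_0)$ chosen via the blocks of the Steiner system so that $\rho_1\rho_2$ lies in a point stabilizer and hence has order a power of $3$.

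The genuine gap is the case $n=4$, which you explicitly leave open and which is the substance of the theorem: it occupies almost all of the paper's proof. The paper's route is to show first that every involution of $N_G(G_{01})$ already lies in $N_{C_G(\rho_0)}(G_{01})$ (Lemma~\ref{lemma3}, resting on a detailed analysis of normalizers of dihedral subgroups of $C_2\times\PSL_2(q)$ and of which maximal subgroups have order divisible by $4$); to deduce via Lemma~\ref{lemma2b} that $G_0$ and $G_3$ must be $\Ree(3)'\cong\PSL_2(8)$ or simple subfield subgroups $\Ree(q')$; to eliminate the $\PSL_2(8)$ possibility by forcing $q=27$ and then showing by a conjugacy computation in $(C_2^2\times D_{14}){:}C_3$ that no element of $\Ree(27)$ can swap $\rho_0$ and $\rho_3$; and finally, for $G_0\cong G_3\cong\Ree(q')$, to use the factorization $q+1=(q'+1)(q'+1-3^{e'+1})(q'+1+3^{e'+1})$ to force $q=q'^3$ and to exhibit a subgroup $H=C_t{:}C_6$ with $G_{03}<H\le G_0\cap G_3$, violating the intersection property. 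None of this is present in your proposal, and your fallback of finishing ``the finitely many small $q$'' by \textsc{Magma} is not available: no bound on $q$ is ever established, so there is no finite residual set of cases, and the exclusion must be (and in the paper is) uniform in $q$.
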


In other words, the groups $\Ree(q)$ behave just like the Suzuki groups:\ they allow representations as string C-groups, but only of rank $3$. We will describe a string C-group representation for $\Ree(q)$, $q\neq 3$, for each value of~$q$. Also, almost simple groups $\Ree(q) < G \leq \Aut(\Ree(q))$ can never be C-groups (in characteristic 3).

Rephrased in terms of polytopes, Theorem~\ref{maintheo} says that among the almost simple groups $\Ree(q) \leq G \leq \Aut(\Ree(q))$, only the groups $G:= \Ree(q)$ are automorphism groups of regular polytopes, and that these polytopes must necessarily have rank $3$.

Ree groups can also be the automorphism groups of abstract chiral polytopes. In fact, Sah~\cite{Sah69} showed that every Ree group $\Ree(3^{2e+1})$, with $2e+1$ an odd prime, is a Hurwitz group; and Jones~\cite{Jon1994} later extended this result to arbitrary simple Ree groups $\Ree(q)$, proving in particular that the corresponding presentations give chiral maps on surfaces. Hence the groups $\Ree(q)$ are also automorphism groups of abstract chiral polyhedra.

It is an interesting open problem to explore whether or not almost simple groups of Ree type also occur as automorphism groups of chiral polytopes of higher rank.

Note that the Ree groups in characteristic 2 are also very special: they are the only (finite) groups of Lie type  arising from a Frobenius twist and having rank at least 2. This makes them special, in a way rather different from the way the Ree groups in characteristic 3 are special. We think that in characteristic 2, quite geometric methods will have to be used in the study of polytopes related to Ree groups. 

\section{Basic notions}
\label{def}

\subsection{Abstract polytopes and string C-groups}

For general background on (abstract) regular polytopes and C-groups we refer to McMullen \& Schulte~\cite[Chapter 2]{MS2002}. 

A polytope ${\cal P}$ is a ranked partially ordered set whose elements are called {\it faces}. A polytope $\cal P$ of rank $n$ has faces of ranks $-1,0,\ldots,n$; the faces of ranks $0$, $1$ or $n-1$ are also called {\it vertices}, {\it edges} or {\it facets}, respectively. In particular, $\cal P$ has a smallest and a largest face, of ranks $-1$ and $n$, respectively. Each flag of $\cal P$ contains $n+2$ faces, one for each rank.  In addition to being locally and globally connected (in a well-defined sense), $\cal P$ is {\it thin\/}; that is, for every flag and every $j = 0,\ldots,n-1$, there is precisely {\it one} other ({\em $j$-adjacent\/}) flag with the same faces except the $j$-face. A polytope of rank $3$ is a {\em polyhedron\/}.  A polytope $\cal P$ is {\it regular\/} if its (automorphism) group $\Gamma ({\cal P})$ is transitive on the flags. If $\Gamma({\cal P})$ has exactly two orbits on the flags such that adjacent flags are in distinct orbits, then $\cal P$ is said to be {\it chiral\/}.  

The groups of regular polytopes are string C-groups, and vice versa.  A {\em C-group\/} of {\em rank\/} $n$ is a group $G$ generated by pairwise distinct involutions $\rho_0,\ldots,\rho_{n-1}$ satisfying the following {\it intersection property\/}:
\[  \langle \rho_j \mid j \in J\rangle \cap \langle \rho_j \mid j \in K\rangle = 
\langle\rho_j \mid j \in J\cap K\rangle \quad 
(J, K \subseteq \{0,\ldots,n-1\}).\]
Moreover, $G$, or rather $(G, \{\rho_0,\ldots, \rho_{n-1}\})$, is a {\em string\/} C-group (of rank $n$) if the underlying Coxeter diagram is a string diagram; that is, if the generators satisfy the relations
\[ (\rho_j\rho_k)^2 = 1 \quad \quad
(0 \leq j < k -1 \leq n-2).\]

Let $G_i := \langle \rho_{j} \mid j \neq i \rangle$ for each $i = 0,1,\ldots, n-1$, and let $G_{ij} := \langle \rho_{k} \mid k \neq i,j \rangle$ for each $i,j = 0,1,\ldots, n-1$ with $i\neq j$.

Each string C-group $G$ (uniquely) determines a regular $n$-polytope $\cal P$ with automorphism group~$G$. The {\em $i$-faces} of $\cal P$ are the right cosets of the distinguished subgroup $G_i$ for each $i = 0,1,\ldots, n-1$, and two faces are incident just when they intersect as cosets;   formally we must adjoin two copies of $G$ itself, as the (unique) $(-1)$- and $n$-faces of $\cal P$.  Conversely, the group $\Gamma({\cal P})$ of a regular $n$-polytope $\cal P$ is a string C-group, whose generators $\rho_j$ map a fixed, or {\em base\/}, flag $\Phi$ of $\cal P$ to the $j$-adjacent flag $\Phi^{j}$ (differing from $\Phi$ in the $j$-face).

\subsection{The Ree groups in characteristic 3}
\label{reebasics}

We let $C_k$ denote a cyclic group of order $k$ and $D_{2k}$ a dihedral group of order~$2k$. 

The Ree group $G:=\Ree(q)$, with $q=3^{2e+1}$ and $e\geq 0$, is a group of order $q^3(q-1)(q^3+1)$. It has a natural permutation representation on a Steiner system ${\cal S} := (\Omega,\mathcal{B}) = S(2,q+1,q^3+1)$ consisting of a set $\Omega$ of $q^3+1$ elements, the {\em points\/}, and a family of $(q+1)$-subsets $\mathcal{B}$ of $\Omega$, the {\em blocks\/}, such that any two points of $\Omega$ lie in exactly one block. This Steiner system is also called a {\em Ree unital\/}. In particular, $G$ acts $2$-transitively on the points and transitively on the incident pairs of points and blocks of ${\cal S}$. 

The group $G$ has a unique conjugacy class of involutions (see~\cite{Ree1960}). Every involution $\rho$ of $G$ has a  block $B$ of $\cal S$ as its set of fixed points, and $B$ is invariant under the centralizer $C_{G}(\rho)$ of $\rho$ in $G$. Moreover, $C_G(\rho)\cong C_2\times \PSL_2(q)$, where $C_2 = \langle \rho \rangle$ and the $\PSL_2(q)$-factor acts on the $q+1$ points in $B$ as it does on the points of the projective line $PG(1,q)$.

The Ree groups $\Ree(q)$ are simple except when $q=3$. In particular, $\Ree(3) \cong P\Gamma L_2(8)\cong\PSL_2(8):C_3$ and the commutator subgroup $\Ree(3)'$ of $\Ree(3)$ is isomorphic to $\PSL_2(8)$.

A list of the maximal subgroups of $G$ is available, for instance, in~\cite[p. 349]{VM2000} and~\cite{Kle1988}. Here we briefly review the list for $\Ree(q)$, with $q\neq 3$, as the maximal subgroups are required in the proof of Theorem~\ref{maintheo}; in parentheses we also note their characteristic properties relative to the Steiner system $\cal S$. 

\begin{itemize}
\item $N_G(A)\cong A : C_{q-1}$ (stabilizer of a point), where $A$ is a $3$-Sylow subgroup of $G$;
\item $C_G(\rho) \cong C_2\times \PSL_2(q)$\, (stabilizer of a block), where $C_2 = \langle \rho \rangle$ and $\rho$ is an involution of $G$;
\item $\Ree(q')$ (stabilizer of a sub-unital of $\cal S$), where $(q')^p= q$ and $p$ is a prime;
\item $N_G(A_i)$, for $i=1,2,3$, where $A_i$ is a cyclic subgroup of $G$ of one of the following kinds:\smallskip
\begin{itemize}
\item $A_1 = C_{\frac{q+1}{4}}$, with $N_G(A_1) \cong (C_{2}^{\,2}\times D_{\frac{q+1}{2}}):C_3$;
\item $A_2 = C_{q+1-3^{e+1}}$, with $N_G(A_2) \cong A_2 : C_6$;
\item $A_3 = C_{q+1+3^{e+1}}$, with $N_G(A_3) \cong A_3 : C_6$.
\end{itemize}
\end{itemize}
Note here that $q\equiv 3\bmod 8$, so $(q-1)/2$ is odd and $(q+1)/2$ is even. Moreover, since $p$ is odd, $q'-1$ and $q'+1$ divide $q-1$ and $q+1$, respectively. Finally, $q+1$ is divisible by 4 but not by 8.

The automorphism group $\mathsf{Aut}(\Ree(q))$ of $\Ree(q)$ is given by
\[ \mathsf{Aut}(\Ree(q)) \cong \Ree(q)\!:\!C_{2e+1}, \]
so in particular $\mathsf{Aut}(\Ree(3)) \cong \Ree(3)$. 

In the proof of our theorem we need the following lemma about normalizers of dihedral subgroups of dihedral groups. The proof is straightforward.

\begin{lemma}\label{norma}
Let $m,n>1$ be integers such that $m\,|\,n$. The normalizer $N_{D_{2n}}(D_{2m})$ of any subgroup $D_{2m}$ of $D_{2n}$ coincides with $D_{2m}$ if $n/m$ is odd, or is isomorphic to a subgroup $D_{4m}$ of $D_{2n}$ if $n/m$ is even. 
\end{lemma}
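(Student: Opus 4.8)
The plan is to work inside a concrete model of $D_{2n}$, say $D_{2n}=\langle r,s\mid r^n=s^2=1,\ srs=r^{-1}\rangle$, and to use the fact that every subgroup $D_{2m}$ with $m\mid n$ is conjugate in $D_{2n}$ to the standard one $H:=\langle r^{n/m},s\rangle$; since conjugation carries normalizers to normalizers, it suffices to compute $N_{D_{2n}}(H)$ for this standard copy. First I would record the elementary structural facts: the unique cyclic subgroup of $D_{2n}$ of order $m$ is $\langle r^{n/m}\rangle$, so it is characteristic in $H$ and hence normalized by all of $N_{D_{2n}}(H)$; this pins down the rotation part. The rotations normalizing $H$ are exactly those $r^k$ for which $r^k s r^{-k}=r^{2k}s\in H$, i.e.\ $r^{2k}\in\langle r^{n/m}\rangle$, i.e.\ $(n/m)\mid 2k$. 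When $n/m$ is odd this forces $(n/m)\mid k$, giving only the $m$ rotations already in $H$; when $n/m$ is even it allows also $k$ with $k\equiv (n/2m)\bmod (n/m)$, doubling the rotation subgroup to the cyclic group of order $2m$ generated by $r^{n/2m}$.

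Next I would handle the reflections in $N_{D_{2n}}(H)$. A reflection $r^k s$ normalizes $H$ iff it normalizes the characteristic cyclic subgroup $\langle r^{n/m}\rangle$ (automatic, since any reflection inverts every rotation) \emph{and} conjugates the reflection $s\in H$ back into $H$: $r^k s\cdot s\cdot (r^k s)^{-1}=r^{2k}s$, again requiring $(n/m)\mid 2k$. So the count of reflections in the normalizer matches the count of rotations in the normalizer. Putting the two parts together: if $n/m$ is odd, $N_{D_{2n}}(H)$ has $m$ rotations and $m$ reflections, so $N_{D_{2n}}(H)=H\cong D_{2m}$; if $n/m$ is even, it has $2m$ rotations and $2m$ reflections, forming the dihedral group $\langle r^{n/2m},s\rangle\cong D_{4m}$, which visibly contains $H$ with index $2$. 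Transporting back by the conjugating element gives the statement for an arbitrary $D_{2m}\le D_{2n}$.

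The only mild subtlety — and the one place a careless argument could go wrong — is the claim that every $D_{2m}$ with $m\mid n$ is $D_{2n}$-conjugate to the standard $H$; this is where the hypothesis $m>1$ matters, since for $m=1$ the "dihedral" group $D_2\cong C_2$ can be generated by any of the $n$ reflections (all conjugate, so the statement would in fact still hold, but the bookkeeping differs) whereas for $m>1$ the rotation subgroup of any $D_{2m}\le D_{2n}$ is forced to be the unique $\langle r^{n/m}\rangle$, after which the subgroup is determined by its single reflection $r^k s$, and the reflections split into exactly $n/m$ conjugacy classes under $\langle r^{n/m}\rangle$ but all lie in one class under the full group $D_{2n}$ when... in fact one checks directly that $r^k s$ and $r^{k'}s$ generate conjugate copies of $D_{2m}$ for all $k,k'$, so conjugacy of all such $D_{2m}$ follows. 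With that reduction in hand the rest is the routine divisibility computation sketched above, so I expect no real obstacle; the lemma is genuinely "straightforward" as claimed, and the write-up is mainly a matter of organizing the two parity cases cleanly.
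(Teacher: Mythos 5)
The paper gives no written proof of this lemma (it is simply declared straightforward), so I can only compare your plan against the natural intended argument; your overall strategy --- fix the presentation $D_{2n}=\langle r,s\mid r^n=s^2=1,\ srs=r^{-1}\rangle$, observe that for $m>1$ the rotation part of any copy of $D_{2m}$ is forced to be the unique order-$m$ subgroup $\langle r^{n/m}\rangle$ of $\langle r\rangle$, and then run the divisibility computation $(n/m)\mid 2k$ for both the rotations and the reflections in the normalizer --- is exactly right, and both parity cases come out correctly.

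However, the reduction step you lean on is false as stated: the subgroups of $D_{2n}$ isomorphic to $D_{2m}$ are \emph{not} all conjugate. They are exactly the groups $K_k=\langle r^{n/m},r^k s\rangle$ with $k$ taken modulo $n/m$, and since $r^j K_k r^{-j}=K_{k+2j}$ while a reflection $r^j s$ sends $K_k$ to $K_{2j-k}$, conjugation preserves the parity of $k$ whenever $n/m$ is even; so in that case there are two conjugacy classes, represented by $K_0$ and $K_1$. (Concretely, the two Klein four subgroups of $D_8$ are both normal, hence nonconjugate.) This failure occurs precisely in the case $n/m$ even, which is one of the two alternatives of the lemma, so it cannot be brushed aside. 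The repair is immediate and does not change your computation: either carry out the identical calculation with $s$ replaced by $r^k s$ throughout (the condition for $r^j$ or $r^j s$ to normalize $K_k$ is again $(n/m)\mid 2j$, independent of $k$), or note that $r\mapsto r$, $s\mapsto rs$ extends to an automorphism of $D_{2n}$ carrying $K_0$ to $K_1$ and hence matching up the normalizers. With that one correction your proof is complete.
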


\section{Proof of Theorem~\ref{maintheo}}

The proof of Theorem~\ref{maintheo} is based on a sequence of lemmas. We begin in Lemma~\ref{lemma1} by showing that if $\Ree(q) < G \leq \Aut(\Ree(q))$ then $G$ can not be a C-group (with any underlying Coxeter diagram). Thus only the Ree groups $\Ree(q)$ themselves need further consideration. Then we prove in Lemma~\ref{lemma2} that $\Ree(q)$ does not admit a representation as a string C-group of rank at least $5$. In the subsequent Lemmas~\ref{lemma3},~\ref{lemma3b} and~\ref{lemma3c} we then extend this to rank $4$ and show that $\Ree(q)$ can also not be represented as a string C-group of rank $4$. Finally, in Lemma~\ref{lemma4} we construct each group $\Ree(q)$ as a rank 3 string C-group.

All information that we use about the groups $\Ree(q)$ can found in~\cite{Kle1988}. 

We repeatedly make use of the following simple observation. If $A:B$ is a semi-direct product of finite groups $A,B$ such that $B$ has odd order, then each involution in $A:B$ must lie in $A$. In fact, if $\rho=\alpha\beta$ with $\alpha\in A$, $\beta\in B$ is an involution, then $1=\rho^{2} = \alpha(\beta\alpha\beta^{-1})\beta^2$, where $\alpha(\beta\alpha\beta^{-1})\in A$ and $\beta^2\in B$; hence $\beta^{2}=1$, so $\beta=1$ and $\rho=\alpha\in A$.

\subsection{Reduction to simple groups $\Ree(q)$}

We begin by eliminating the almost simple groups of Ree type that are not simple.

\begin{lemma}\label{lemma1}
Let $\Ree(q) < G \leq \mathsf{Aut}(\Ree(q))$, where $q=3^{2e+1}$. Then $G$ is not a C-group.
\end{lemma}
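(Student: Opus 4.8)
The strategy is to show that any group $G$ with $\Ree(q) < G \leq \mathsf{Aut}(\Ree(q))$ fails to contain enough involutions to generate it, so it cannot be a C-group of any rank. Since $\mathsf{Aut}(\Ree(q)) \cong \Ree(q)\!:\!C_{2e+1}$ and the quotient $G/\Ree(q)$ is a nontrivial subgroup of the odd-order cyclic group $C_{2e+1}$, we may write $G = \Ree(q)\!:\!B$ with $B$ a nontrivial cyclic group of odd order $d$, where $d \mid 2e+1$ and $d > 1$ (this also forces $q \neq 3$, since $\mathsf{Aut}(\Ree(3)) \cong \Ree(3)$, so there is no such $G$ when $q=3$). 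Now apply the ``simple observation'' recorded just before the lemma: in a semidirect product $A\!:\!B$ with $B$ of odd order, every involution lies in $A$. Hence every involution of $G$ already lies in $\Ree(q)$, so the subgroup generated by all involutions of $G$ is contained in $\Ree(q)$, which is a proper subgroup of $G$. A C-group of rank $n$ is by definition generated by its $n$ distinguished involutions $\rho_0,\dots,\rho_{n-1}$; if $G$ were such a group, then $G = \langle \rho_0,\dots,\rho_{n-1}\rangle \leq \Ree(q) < G$, a contradiction.

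In more detail, the first step is to record the structure of $G$: from $\Ree(q) \trianglelefteq G \leq \mathsf{Aut}(\Ree(q))$ and $\mathsf{Aut}(\Ree(q))/\Ree(q) \cong C_{2e+1}$, the image of $G$ in the quotient is a subgroup of $C_{2e+1}$, hence cyclic of some order $d \mid 2e+1$; since $G \neq \Ree(q)$ we have $d>1$, and $d$ is odd because $2e+1$ is odd. Because $\Ree(q)$ has a complement in $\mathsf{Aut}(\Ree(q))$ (a subgroup $C_{2e+1}$ of field automorphisms), intersecting that complement with $G$ gives a complement $B \cong C_d$ to $\Ree(q)$ in $G$, so $G = \Ree(q)\!:\!B$. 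The second step is the involution count: apply the displayed observation with $A = \Ree(q)$ and this $B$ to conclude that every involution of $G$ lies in $\Ree(q)$. The third step is the C-group contradiction as above.

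There is essentially no hard part here — the argument is short and self-contained once the structure $\mathsf{Aut}(\Ree(q)) \cong \Ree(q)\!:\!C_{2e+1}$ and the odd-order-complement observation are in hand; both are already available in the excerpt. The only point requiring a word of care is the claim that $G$ actually has a cyclic \emph{complement} $B$ rather than merely a cyclic quotient: this follows because the field-automorphism subgroup $C_{2e+1} \leq \mathsf{Aut}(\Ree(q))$ is a genuine complement and $B := G \cap C_{2e+1}$ has order $d = |G/\Ree(q)|$, so $\Ree(q) \cap B = 1$ and $\Ree(q)B = G$. Alternatively — and this avoids even that small argument — one can note that it suffices that $G/\Ree(q)$ has odd order: if $\rho \in G$ is an involution, its image $\bar\rho$ in the odd-order group $G/\Ree(q)$ satisfies $\bar\rho^2 = 1$, hence $\bar\rho = 1$, i.e. $\rho \in \Ree(q)$; this is exactly the observation applied at the level of the quotient and needs no splitting. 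Either way, the set of involutions of $G$ generates a subgroup of $\Ree(q) \subsetneq G$, so $G$ is not a C-group, and in particular $\Ree(q) < G \leq \mathsf{Aut}(\Ree(q))$ yields no C-groups and a fortiori no regular polytopes.
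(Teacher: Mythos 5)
Your proposal is correct and follows essentially the same route as the paper: both rest on the observation that in $\Ree(q)\!:\!C_{2e+1}$ with $2e+1$ odd every involution lies in $\Ree(q)$, so no subgroup strictly above $\Ree(q)$ can be generated by involutions. The paper simply applies the observation once to the full automorphism group rather than to $G$ itself, so your extra care about the existence of a complement $B$ in $G$ (or the quotient version) is fine but not needed.
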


\begin{proof}
Since $\mathsf{Aut}(\Ree(q)) \cong \Ree(q)\!:\!C_{2e+1}$ and $2e+1$ is odd, every involution in $\mathsf{Aut}(\Ree(q))$ lies in $\Ree(q)$ (by the previous observation), and hence any subgroup of $\mathsf{Aut}(\Ree(q))$ generated by involutions must be a subgroup of $\Ree(q)$. Thus no subgroup $G$ of $\mathsf{Aut}(\Ree(q))$ strictly above $\Ree(q)$ can be a C-group. (When $e=0$ we have $\mathsf{Aut}(\Ree(3)) \cong \Ree(3)$, so the statement holds trivially.) 
\end{proof}

\subsection{String C-groups of rank at least five}

By Lemma~\ref{lemma1} we may restrict ourselves to Ree groups $G=\Ree(q)$. We first rule out the possibility that the rank is $5$ or larger. 

\begin{lemma}~\label{lemma2}
Let $G=\Ree(q)$, where $q=3^{2e+1}\neq 3$. Suppose $G$ has a generating set $S$ of $n$ involutions such that $(G,S)$ is a string $C$-group. Then $n\leq 4$.
\end{lemma}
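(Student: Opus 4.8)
The plan is to suppose, for contradiction, that $(G,S)$ is a string C-group of rank $n \geq 5$ with $S = \{\rho_0,\ldots,\rho_{n-1}\}$, and to derive a contradiction by examining the small subgroups generated by subsets of $S$ together with the list of maximal subgroups of $G = \Ree(q)$. The key structural fact to exploit is that in a string C-group the two ``end'' generators $\rho_0$ and $\rho_{n-1}$ commute with many of the others: specifically $\rho_0$ commutes with $\rho_2,\ldots,\rho_{n-1}$, and more generally the string relations force the subgroup $\langle\rho_0,\ldots,\rho_{k}\rangle$ and $\langle\rho_{k+1},\ldots,\rho_{n-1}\rangle$ to commute whenever there is a gap. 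First I would look at $G_{n-1} = \langle \rho_0,\ldots,\rho_{n-2}\rangle$ and $G_0 = \langle\rho_1,\ldots,\rho_{n-1}\rangle$: each is itself a string C-group of rank $n-1 \geq 4$, and each is a proper subgroup of $G$ (by the intersection property, $G_0 \cap G_{n-1} = G_{0,n-1}$ is proper), hence each lies inside a maximal subgroup of $G$.

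The heart of the argument is then a case analysis over which maximal subgroup can contain such a rank-$(n-1)$ string C-group with $n-1 \geq 4$. Running down the list in Section~\ref{reebasics}: the point stabilizer $A:C_{q-1}$ and the normalizers $N_G(A_2) \cong A_2:C_6$, $N_G(A_3)\cong A_3:C_6$ are (cyclic or $3$-group) extended by a group that, modulo the odd-order or small part, leaves very little room — using the ``semidirect product with odd-order complement'' observation repeatedly, any subgroup generated by involutions in $A:C_{q-1}$ lies in $A$, which is a $3$-group and contains no involutions at all, so these three families are killed immediately. The subgroup $N_G(A_1) \cong (C_2^{\,2}\times D_{(q+1)/2}):C_3$ has its involutions confined to $C_2^{\,2}\times D_{(q+1)/2}$, a group whose $2$-structure is too small to support a rank-$4$ string C-group (a rank-$4$ string C-group needs, e.g., a rank-$3$ string C-subgroup which is already a quotient of an infinite Coxeter group or a spherical one of order $\geq 24$). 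The block stabilizer $C_G(\rho) \cong C_2 \times \PSL_2(q)$ is the only genuinely dangerous case: here one uses the known classification of string C-group representations of $\PSL_2(q)$ (rank at most $4$, and rank $4$ only for the $57$-cell with $q$ a specific small value, from~\cite{LS2005a}) — crucially the factor $C_2 = \langle\rho\rangle$ is central, so at most one generator can involve it, and one shows the rank cannot reach $n-1 \geq 4$ in a way compatible with being a \emph{proper} subgroup arising as $G_0$ or $G_{n-1}$. Finally the subfield subgroups $\Ree(q')$ are handled by induction on $q$ (the base case $\Ree(3)$ being degenerate), since a string C-group of rank $n-1$ inside $\Ree(q')$ would contradict the same statement for the smaller Ree group.

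Having bounded the rank of every $G_i$, I would then need to rule out $n = 5$ directly, i.e. that $G$ \emph{itself} is a rank-$5$ string C-group even though each $G_i$ has rank $4$. For this I would use that $G_0$ and $G_{n-1}$ are both rank-$4$ string C-groups, each contained in a block stabilizer $C_G(\rho)$ (the only maximal subgroup that survived), and that $\langle\rho_0,\rho_4\rangle \cong C_2\times C_2$ lies in $G_0\cap G_4 = G_{0,4}$, which is a rank-$3$ string C-group inside the intersection of two block stabilizers. Pinning down the intersection of two block stabilizers — equivalently, the stabilizer in $\Ree(q)$ of two blocks of the Ree unital — and showing it is too small to contain a rank-$3$ string C-group of the required type gives the contradiction. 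The main obstacle, I expect, is precisely this last step together with the careful handling of the $C_2 \times \PSL_2(q)$ case: one must combine the precise C-group theory of $\PSL_2(q)$ with geometric facts about the Ree unital (how block stabilizers intersect, what subgroups fix configurations of points and blocks), and make sure the central involution $\rho$ is tracked correctly so that the $\PSL_2(q)$-classification is actually applicable to the quotient. Everything else — the point stabilizer, the $N_G(A_i)$, the reduction via subfield subgroups — is routine given the ``odd-order complement'' trick and Lemma~\ref{norma}.
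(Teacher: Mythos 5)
Your proposal goes down a much harder road than the paper and, as written, has concrete errors. The paper's proof is a four-line argument that you missed: if $n\geq 5$, the string relations make $\rho_0,\rho_1$ commute with $\rho_{n-2},\rho_{n-1}$, and simplicity of $G$ forces $\rho_0\rho_1$ and $\rho_{n-2}\rho_{n-1}$ to have order at least $3$ (otherwise $G$ would split as a direct product with a $\langle\rho_0\rangle$ or $\langle\rho_{n-1}\rangle$ factor). Hence $\langle\rho_0,\rho_1,\rho_{n-2},\rho_{n-1}\rangle\cong D_{2c}\times D_{2d}$ with $c,d\geq 3$, and inspection of the subgroup structure of $\Ree(q)$ shows it contains no direct product of two non-abelian dihedral groups. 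No induction, no classification of C-subgroups of the maximal subgroups, and no analysis of $n=5$ versus $n>5$ is needed.

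Beyond missing this shortcut, several steps of your plan do not hold up. First, the ``semidirect product with odd-order complement'' observation does not apply to $A\!:\!C_{q-1}$, $A_2\!:\!C_6$ or $A_3\!:\!C_6$: the complements $C_{q-1}$ and $C_6$ have \emph{even} order, and these subgroups do contain involutions outside $A$ and the $A_i$. (They are in fact harmless, but for a different reason: the $2$-part of each of their orders is exactly $2$, so they contain no Klein four-group $\langle\rho_0,\rho_2\rangle$, hence no string C-group of rank $\geq 3$.) Second, your final step asserts that $\langle\rho_0,\rho_4\rangle\leq G_0\cap G_4=G_{04}$; this is false, since $\rho_0\notin G_0$ and $\rho_4\notin G_4$ by the intersection property, so the intended contradiction via stabilizers of two blocks never gets off the ground. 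Third, the crucial case $C_2\times\PSL_2(q)$ is not actually resolved: the classification of string C-group \emph{representations of the full group} $\PSL_2(q)$ from~\cite{LS2005a} says nothing about which \emph{subgroups} of $C_2\times\PSL_2(q)$ can be rank-$4$ string C-groups, and controlling those (tracking the central involution, the dihedral and subfield subgroups, etc.) is exactly the kind of work the paper defers to its separate, and much longer, rank-$4$ analysis. So the proposal is not a proof; the essential missing idea is the direct product of dihedral groups generated by the two ends of the string.
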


\begin{proof}
Let $S = \{\rho_0,\ldots ,\rho_{n-1}\}$, so in particular, $G=\langle \rho_0,\ldots ,\rho_{n-1}\rangle$. Then $\rho_0$ commutes with $\rho_2,\ldots,\rho_{n-1}$, since the underlying Coxeter diagram is a string. However, $\rho_0$ does not commute with $\rho_1$, since otherwise $G=\langle\rho_0\rangle \times \langle\rho_1,\ldots ,\rho_{n-1}\rangle$; the latter is impossible since $G$ is simple. In a similar way, we can also show that $\rho_{n-1}$ does not commute with $\rho_{n-2}$. Now suppose $n\geq 5$ and consider the subgroup $H:= \langle \rho_0,\rho_1,\rho_{n-2},\rho_{n-1}\rangle$ of $G$. Then $H$ must be isomorphic to $D_{2c}\times D_{2d}$ for some integers $c,d \geq 3$. However, inspection of the list of maximal subgroups of $\Ree(q)$ described above shows that direct products of (non-abelian) dihedral groups never occur as subgroups in $G$. So $n$ is at most 4.
\end{proof}

\subsection{String C-groups of rank four}

Next we eliminate the possibility that the rank is $4$. We begin with a general lemma about string C-groups that are simple.

\begin{lemma}\label{lemma2b}
Let $(G,S)$ be a string C-group of rank $n$, and let $G$ be simple.
Then $N_G(G_{01})\backslash N_G(G_{0})$ must contain an involution (namely $\rho_0$).
\end{lemma}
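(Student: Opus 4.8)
The plan is to verify that the distinguished generator $\rho_0$ itself is the required involution: it normalizes $G_{01}$ but not $G_0$. So the proof splits into two claims, corresponding to the two conditions defining membership in $N_G(G_{01})\backslash N_G(G_0)$.

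The first claim, $\rho_0\in N_G(G_{01})$, is immediate from the string condition. Since the underlying Coxeter diagram is a string, $\rho_0$ commutes with $\rho_2,\ldots,\rho_{n-1}$, and these generate $G_{01}=\langle\rho_2,\ldots,\rho_{n-1}\rangle$; hence $\rho_0$ centralizes $G_{01}$, so in particular $\rho_0\in N_G(G_{01})$.

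The second claim, $\rho_0\notin N_G(G_0)$, is where simplicity of $G$ is used. I would argue by contradiction. If $\rho_0$ normalized $G_0=\langle\rho_1,\ldots,\rho_{n-1}\rangle$, then $\langle\rho_0\rangle G_0$ would be a subgroup, and it equals $\langle\rho_0,\rho_1,\ldots,\rho_{n-1}\rangle=G$. The intersection property, applied to $J=\{0\}$ and $K=\{1,\ldots,n-1\}$, gives $\langle\rho_0\rangle\cap G_0=1$, so counting orders yields $|G|=|\langle\rho_0\rangle G_0|=2|G_0|$, i.e.\ $[G:G_0]=2$. A subgroup of index $2$ is normal, and it is proper and nontrivial since $G$ contains at least the two distinct involutions $\rho_0,\rho_1$ and hence has order greater than $2$. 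This contradicts the simplicity of $G$. Therefore $\rho_0\notin N_G(G_0)$, and combining the two claims gives $\rho_0\in N_G(G_{01})\backslash N_G(G_0)$, as asserted.

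I do not expect a genuine obstacle in this argument; the only point needing a little care is excluding the degenerate case $G\cong C_2$, in which an index-$2$ (i.e.\ trivial) subgroup would not violate simplicity. This is ruled out at once, since the statement implicitly assumes rank $n\ge 2$ (so that $G_{01}$ is defined), and then $G$ has at least two distinct generating involutions $\rho_0\neq\rho_1$, forcing $|G|>2$.
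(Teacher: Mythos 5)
Your proof is correct and follows essentially the same route as the paper: exhibit $\rho_0$ as the witness, note it centralizes $G_{01}$ by the string condition, and derive a contradiction with simplicity if it normalized $G_0$. The only cosmetic difference is that you reach normality of $G_0$ via the index-$2$ count $|G|=2|G_0|$ (using the intersection property), whereas the paper observes directly that $G_0$ would then be normalized by every generator of $G$; both are fine.
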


\begin{proof}
The involution $\rho_0$ centralizes $G_{01}$ and hence must lie in $N_G(G_{01})$. On the other hand, $\rho_0$ cannot also lie in $N_G(G_0)$ for otherwise $G_0$ would have to be a nontrivial normal subgroup in the simple group $G$.
\end{proof}

The next two lemmas will be applied to dihedral subgroups in subgroups of type $\PSL_{2}(q)$ or $C_{2}\times \PSL_{2}(q)$ of $\Ree(q)$, respectively. 

\begin{lemma}
\label{divd}
Let $q=3^{2e+1}$ and $e\geq 0$. Then the order $2d$ of a non-abelian dihedral subgroup of $\PSL_{2}(q)$ must divide $q-1$ or $q+1$. Moreover, $d\not\equiv 0\bmod 4$, and $d$ is even only if $2d$ divides $q+1$.
\end{lemma}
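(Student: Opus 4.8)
The plan is to reduce the statement to the well-known description of torus normalisers in $\PSL_2(q)$ (for $q$ odd) as dihedral groups of order $q-1$ or $q+1$, and then to extract the parity information from the congruence $q=3^{2e+1}\equiv 3\bmod 8$.

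First I would show that every non-abelian dihedral subgroup $D$ of $\PSL_2(q)$ lies in the normaliser of a maximal torus. Write $D\cong D_{2d}$ with $d\geq 3$ and let $C\cong C_d$ be its (normal) rotation subgroup. Since $q$ is a power of $3$, the Sylow $3$-subgroups of $\PSL_2(q)$ are elementary abelian, so any cyclic subgroup consisting of unipotent elements has order $1$ or $3$ and lies inside a single Borel subgroup $U{:}T$, where $T$ is cyclic of odd order $(q-1)/2$; such a Borel is a semidirect product of a $3$-group by a group of odd order and hence has no involutions. As the reflections of $D$ are involutions normalising $C$, this rules out $C$ being unipotent, so $C$ is generated by a semisimple element of order $d$ and therefore lies in a maximal torus $T'$ of order $(q-1)/2$ or $(q+1)/2$. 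For $d\geq 3$ one checks that $T'=C_{\PSL_2(q)}(C)$, so $T'$ is characteristic in $N_{\PSL_2(q)}(C)$; hence $D\leq N_{\PSL_2(q)}(C)\leq N_{\PSL_2(q)}(T')$, and the latter is dihedral of order $2|T'|$, which equals $q-1$ or $q+1$. Thus $2d$ divides $q-1$ or $q+1$.

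For the parity assertions, $3^2\equiv 1\bmod 8$ gives $q=3\cdot(3^2)^e\equiv 3\bmod 8$, so $q-1=2k$ with $k$ odd and $q+1=4\ell$ with $\ell$ odd. If $2d\mid q-1$ then $d\mid k$ is odd; if $2d\mid q+1$ then $d\mid 2\ell$, so $d$ is odd or twice an odd number. In either case $d\not\equiv 0\bmod 4$; and if $d$ is even we must be in the second case, so $2d\mid q+1$.

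The delicate point is the first step: ruling out that a non-abelian dihedral subgroup sits inside some subgroup of $\PSL_2(q)$ other than a torus normaliser. This can be settled either by the direct argument above, using the structure of Borel subgroups and of centralisers of semisimple elements, or by appealing to Dickson's classification of the subgroups of $\PSL_2(q)$ and discarding the exceptional cases: $A_4$ has no non-abelian dihedral subgroup; $S_4$ and $A_5$ are not subgroups of $\PSL_2(q)$, since $q\equiv 3\bmod 8$ and $q\not\equiv\pm 1\bmod 5$; and for a proper subfield subgroup $\PSL_2(3^m)$ with $m\mid 2e+1$ the same torus argument bounds a dihedral subgroup by $3^m-1$ or $3^m+1$, which divide $q-1$ or $q+1$ respectively because $m\mid 2e+1$ and $(2e+1)/m$ is odd. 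The remaining estimates are elementary arithmetic modulo $8$.
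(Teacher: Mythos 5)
Your proof is correct, and your primary argument takes a genuinely different route from the paper's. The paper simply quotes the consequence of Dickson's classification that a non-abelian dihedral subgroup of $\PSL_2(q)$ has order $6$ or order dividing $q\pm1$, and then removes the exceptional order $6$ by descending through the subfield subgroups $\PSL_2(q')$ (the only maximal subgroups of order divisible by $6$) until it bottoms out at $\PSL_2(3)\cong A_4$, which has no subgroup of order $6$. You instead argue structurally from scratch: the rotation subgroup $C_d$ cannot be unipotent, since then its normaliser would lie in a Borel subgroup of odd order $q(q-1)/2$ and so could contain no reflections; hence $C_d$ is semisimple, its centraliser is a maximal torus $T'$, and the whole dihedral group is trapped in $N(T')\cong D_{q\pm1}$. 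This is self-contained and avoids citing the subgroup list, at the cost of being longer; one step you should make explicit is that a nontrivial unipotent cyclic subgroup fixes a unique point of the projective line, so its \emph{full normaliser} (not just the subgroup itself) lies in the corresponding Borel, which is what forces the reflections into a group of odd order. Your fallback route via Dickson is closer in spirit to the paper, though you dispose of the exceptional subgroups by congruences ($q\equiv3\bmod 8$ excludes $S_4$, and $q\not\equiv\pm1\bmod 5$ excludes $A_5$) rather than by the paper's descent; if you take that route you should also record that $\PGL_2(q'')$ does not occur as a subgroup (because $q$ is an odd power of $3$), since such a subgroup would carry dihedral subgroups of order $2(q''\pm1)$ not covered by your subfield argument. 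The parity conclusions drawn from $q\equiv3\bmod 8$ are handled identically in both proofs.
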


\begin{proof}
Suppose $D_{2d}$ is a non-abelian dihedral subgroup of $\PSL_{2}(q)$, so $d\geq 3$. We claim that $2d$ must divide $q+1$ or $q-1$. Recall that under the assumptions on $q$, the order $2d$ must either be $6$ or must divide $q-1$ or $q+1$. It remains to eliminate $6$ as a possible order. In fact, since $q$ is an odd power of $3$, the only maximal subgroups of $\PSL_{2}(q)$ with an order divisible by $6$ are subgroups $\PSL_{2}(q')$ with $q'$ a smaller odd power of $3$. If we apply this argument over and over again with smaller odd powers of $3$, we eventually are left with a subgroup $\PSL_{2}(3)$. However, $\PSL_{2}(3)\cong A_4$ and hence cannot have a subgroup of order $6$. Thus $2d$ must divide $q+1$ or $q-1$. This proves the first statement of the lemma. The second statement follows from the fact that $q\equiv 3\bmod 8$.
\end{proof}

\begin{lemma}
\label{normc2psl}
Let $q=3^{2e+1}$ and $e\geq 0$, let $2d$ divide $q-1$ or $q+1$, and let $D_{2d}$ be a non-abelian dihedral subgroup of a group $C:=C_{2}\times \PSL_{2}(q)$.\\[.01in]
\indent (a) Then there exists a dihedral subgroup $D$ in $\PSL_{2}(q)$ such that $D_{2d}$ is a subgroup of $C_{2}\times D$ of index $1$ or $2$, and $N_{C}(D_{2d})=N_{C}(C_{2}\times D)=C_{2}\times N_{\PSL_{2}(q)}(D)$. Here the normalizer $N_{\PSL_{2}(q)}(D)$ must lie in a maximal subgroup $D_{q+1}$ or $D_{q-1}$ of $\PSL_{2}(q)$, and coincide with $N_{D_{q+1}}(D)$ or $N_{D_{q-1}}(D)$, according as $2d$ divides $q-1$ or $q+1$. \\[.01in]
\indent (b) Let $D_{2d}\cong D$ (that is, the index is $2$). If $2d\mid (q-1)$ or if $2d\mid (q+1)$ and $(q+1)/2d$ is odd, then $N_{\PSL_{2}(q)}(D)=D$ and $N_{C}(D_{2d})\cong C_2\times D_{2d}$. If $2d\mid (q+1)$ and $(q+1)/2d$ is even, then $N_{\PSL_{2}(q)}(D)\cong D_{4d}$ and $N_{C}(D_{2d})\cong C_2\times D_{4d}$.\\[.01in]
\indent (c) If $D_{2d}=C_{2}\times D$ (that is, $d$ is even, $d/2$ is odd, $D\cong D_d$, and the index is $1$), then $N_{\PSL_{2}(q)}(D)\cong D_{2d}$ and $N_{C}(D_{2d})\cong C_2\times D_{2d}$ (regardless of whether $2d\mid (q-1)$ or $2d\mid (q+1)$).\\[.01in]
\indent (d) The structure of $N_{C}(D_{2d})$ only depends on $d$ and $q$, not on the way in which $D_{2d}$ is embedded in $C$. 
\end{lemma}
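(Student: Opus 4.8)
The plan is to exploit that the factor $C_2=\langle z\rangle$ is central in $C=C_2\times\PSL_2(q)$, and to split the analysis according to whether the central involution $z$ of $C$ lies in $D_{2d}$. Write $\pi\colon C\to\PSL_2(q)$ for the projection and set $D:=\pi(D_{2d})$. Since $z$ is central in $C$ it is central in $D_{2d}$; as $D_{2d}$ is dihedral of order $2d$ with $d\ge 3$, its centre is trivial when $d$ is odd and equals $\langle r^{d/2}\rangle$ when $d$ is even (here $r$ generates the rotation subgroup), so $z\in D_{2d}$ forces $d$ even. First I would record that in either case every element of $D_{2d}$ has its $\pi$-image in $D$, whence $D_{2d}\le C_2\times D$. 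If $z\notin D_{2d}$ then $\ker(\pi|_{D_{2d}})=\langle z\rangle\cap D_{2d}=1$, so $\pi$ is injective on $D_{2d}$, $D\cong D_{2d}$, and the index is $2$. If $z\in D_{2d}$ then $\ker(\pi|_{D_{2d}})=\langle z\rangle$, $D\cong D_{2d}/\langle z\rangle\cong D_d$, the orders of $D_{2d}$ and $C_2\times D$ coincide, and so $D_{2d}=C_2\times D$ (index $1$); comparison with $C_2\times D_d$ shows this is possible exactly when $d/2$ is odd. This already yields the dichotomy of parts~(b) and~(c) with their parenthetical descriptions.

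Next I would compute $N_C(D_{2d})$. In the index-$1$ case $D_{2d}=C_2\times D$, and since $z$ is central a short computation gives $N_C(C_2\times D)=C_2\times N_{\PSL_2(q)}(D)$ at once. In the index-$2$ case, injectivity of $\pi|_{D_{2d}}$ lets me write $D_{2d}$ as the graph $\{(\lambda(x),x):x\in D\}$ of a homomorphism $\lambda\colon D\to C_2$; conjugating such an element by $(\varepsilon,g)\in C$ and using centrality of $\varepsilon$ shows that $(\varepsilon,g)$ normalises $D_{2d}$ precisely when $g\in N_{\PSL_2(q)}(D)$ and $\lambda$ is invariant under conjugation by $g$, so that $N_C(D_{2d})=C_2\times\mathrm{Stab}_{N_{\PSL_2(q)}(D)}(\lambda)$. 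To pin down $N_{\PSL_2(q)}(D)$ I would note that the rotation subgroup of $D$ is characteristic and, having order $\ge 3$ and prime to the characteristic $3$ (as $3\nmid q\pm1$), is generated by a regular semisimple element; hence it lies in a unique maximal torus $C_N$ of $\PSL_2(q)$ with $2N\in\{q-1,q+1\}$, any element normalising $D$ normalises this torus, and therefore $N_{\PSL_2(q)}(D)=N_{D_{2N}}(D)$, which Lemma~\ref{norma} evaluates.

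The main obstacle is the graph case, where a priori $\mathrm{Stab}_{N_{\PSL_2(q)}(D)}(\lambda)$ could be a \emph{proper} subgroup of $N_{\PSL_2(q)}(D)$, which would contradict~(a) and make $N_C(D_{2d})$ depend on the embedding, breaking~(d). Indeed, when $d$ is even one has $D^{\mathrm{ab}}\cong C_2\times C_2$, and an outer normaliser $N_{D_{2N}}(D)\cong D_{4d}$ acts on the three nontrivial characters as a transvection, fixing only one of them. The resolution is number-theoretic and rests on $q\equiv 3\bmod 8$, that is $v_2(q-1)=1$ and $v_2(q+1)=2$. Writing $2N=q\pm1$, the parity of $N/d$ that governs Lemma~\ref{norma} is controlled by $2$-adic valuations: if $2d\mid q-1$ then $d$ is odd and $N/d$ is odd; if $2d\mid q+1$ then $v_2(d)\le 1$, and $N/d$ is even exactly when $d$ is odd. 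Consequently $N_{\PSL_2(q)}(D)\cong D_{4d}$ occurs \emph{only} when $d$ is odd, in which case $D^{\mathrm{ab}}\cong C_2$ has a single nontrivial character, automatically stabilised; and whenever $d$ is even we are forced into $N_{\PSL_2(q)}(D)=D$, consisting of inner automorphisms that fix every character. In all cases $\mathrm{Stab}(\lambda)=N_{\PSL_2(q)}(D)$, so $N_C(D_{2d})=C_2\times N_{\PSL_2(q)}(D)$, completing~(a).

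Finally, parts~(b) and~(c) follow by substituting the value of $N_{D_{2N}}(D)$ from Lemma~\ref{norma} into the formula of~(a): the three listed conditions distinguish the cases $N/d$ odd (giving $N_{\PSL_2(q)}(D)=D$, hence $N_C(D_{2d})\cong C_2\times D_{2d}$) from $N/d$ even (giving $D_{4d}$, hence $C_2\times D_{4d}$); for~(c) one checks with $m=d/2$ that $(q+1)/d$ is even, so $N_{D_{2N}}(D_d)\cong D_{2d}$ and $N_C(D_{2d})\cong C_2\times D_{2d}$. For part~(d) I would simply read off that these answers depend only on $d$ and $q$: when $d$ is even both the index-$1$ embedding (part~(c)) and the index-$2$ embedding (the relevant subcase of~(b)) yield $C_2\times D_{2d}$, while for $d$ odd only the index-$2$ embedding occurs, giving $C_2\times D_{2d}$ or $C_2\times D_{4d}$ according as $2d$ divides $q-1$ or $q+1$. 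The agreement of the two even-$d$ embeddings is the heart of~(d), and it is exactly what the valuation analysis above guarantees.
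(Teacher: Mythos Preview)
Your argument is correct and follows the same overall architecture as the paper's proof: project to $\PSL_2(q)$ to obtain $D$, split into the index-$1$ and index-$2$ cases, reduce $N_C(D_{2d})$ to $C_2\times N_{\PSL_2(q)}(D)$, and then evaluate the latter via Lemma~\ref{norma} using $q\equiv 3\bmod 8$.

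The one substantive difference is in how you handle the index-$2$ case. The paper writes $D_{2d}=(\{1\}\times E)\cup(\{\rho\}\times(D\setminus E))$ for an index-$2$ subgroup $E\le D$, and must show that any $\beta\in N_{\PSL_2(q)}(D)$ also normalises $E$; it disposes of the dangerous case ($E$ dihedral, hence $d$ even) by noting that $|D_{4d}|$ would then be divisible by $8$ while $|\PSL_2(q)|$ is not. You instead realise $D_{2d}$ as the graph of a character $\lambda\colon D\to C_2$ (so $E=\ker\lambda$), whence the normaliser condition becomes the transparent statement that $g$ fixes $\lambda$; you then observe that $N_{\PSL_2(q)}(D)$ can exceed $D$ only when $d$ is odd, in which case $D$ has a unique nontrivial character and the stabiliser is automatic. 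The two arguments are logically equivalent---your $\lambda$ is dual to the paper's $E$---but your formulation makes the obstruction and its resolution more visible, and it explains \emph{why} the embedding-independence in part~(d) holds without a separate case check at the end.
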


\begin{proof}
For the first part, suppose $C_{2}=\langle\rho\rangle$ and $D_{2d}=\langle\sigma_0,\sigma_1\rangle$ where $\sigma_0,\sigma_1$ are standard involutory generators for $D_{2d}$. Write $\sigma_{0}=(\rho^{i},\sigma_{0}')$ and $\sigma_{1}=(\rho^{j},\sigma_{1}')$ for some $i,j=0,1$ and involutions $\sigma_{0}',\sigma_{1}'$ in $\PSL_{2}(p)$. Then $D:= \langle\sigma_{0}',\sigma_{1}'\rangle$ is a dihedral subgroup of $\PSL_{2}(p)$, and $D_{2d}$ lies in $C_{2}\times D$. Since the period of $\sigma_{0}'\sigma_{1}'$ divides that of $\sigma_{0}\sigma_{1}$, the order of $D$ is at most $2d$ and $D_{2d}$ has index $1$ or $2$ in $C_{2}\times D$. If this index is $1$ then $D_{2d}=C_{2}\times D$ (and $d$ is even and $D\cong D_{d}$). If the index of $D_{2d}$ in $C_{2}\times D$ is $2$, then $D\cong D_{2d}$ and $D_{2d}\cap \{1\}\times D$ must have index $1$ or $2$ in $\{1\}\times D$. If the index of $D_{2d}\cap \{1\}\times D$ in $\{1\}\times D$ is $1$ then clearly $D_{2d}=\{1\}\times D$ and $D_{2d}$ can be viewed as a subgroup of $\PSL_{2}(q)$. If the index of $D_{2d}\cap \{1\}\times D$ is $2$, then $D_{2d}\cap \{1\}\times D$ is of the form $\{1\}\times E$ where $E$ is either the cyclic subgroup $C_d$ of $D$, or $d$ is even and $E$ is one of the two dihedral subgroups of $D$ of order $d$. (Note here that $D_{2d}$ cannot itself be a direct product in which one factor is generated by~$\rho$, since $\rho$ cannot lie in $D_{2d}$.)

Next we investigate normalizers. First note that the normalizer of a direct subproduct in a direct product of groups is the direct product of the normalizers of the component groups. Thus $N_{C}(C_{2}\times D)=C_{2}\times N_{\PSL_{2}(q)}(D)$.

We now show that the normalizers in $C$ of the subgroups $D_{2d}$ and $C_{2}\times D$ coincide. There is nothing to prove if $D_{2d}=C_{2}\times D$. Now suppose that $D_{2d}$ has index $2$ in $C_{2}\times D$ and $E$ is as above. Then it is convenient to write $D_{2d}$ in the form 
\begin{equation}
\label{2dunion}
D_{2d} \,=\, (\{1\}\times E)\; \cup\; (\{\rho\}\times (D\backslash E)).
\end{equation}
If $(\alpha,\beta)\in C$ then 
\begin{equation}\label{norma}
(\alpha,\beta)D_{2d}(\alpha,\beta)^{-1}
= (\{1\}\times \beta E \beta^{-1})\, \cup\, (\{\rho\}\times \beta (D\backslash E)\beta^{-1}).
\end{equation}
Now if $(\alpha,\beta)\in N_{C}(D_{2d})$ then the group on the left in (\ref{norma}) is just $D_{2d}$ itself and therefore $\beta E \beta^{-1}=E$ and $\beta (D\backslash E)\beta^{-1}=D\backslash E$. It follows that $\beta$ normalizes both $E$ and $D$, so in particular $(\alpha,\beta)\in N_{C}(C_{2}\times D)$. Hence $N_{C}(D_{2d})\leq N_{C}(C_{2}\times D)$.

Now suppose that $(\alpha,\beta)\in N_{C}(C_{2}\times D)$. Then $\beta$ normalizes $D$. But $\beta E \beta^{-1}$ must be a subgroup of $D$ of index $2$ isomorphic to $E$, and hence $\beta E \beta^{-1}$ and $E$ are either both cyclic or both are dihedral. Clearly, if both subgroups are cyclic then $\beta E \beta^{-1}=E$. However, the case when both subgroups are dihedral is more complicated. First recall that then $d$ must be even. Now the normalizer $N_{\PSL_{2}(q)}(D)$ of the dihedral subgroup $D$ of $\PSL_{2}(q)$ in $\PSL_{2}(q)$ either coincides with $D$ (that is, $D$ is self-normalized), or is a dihedral subgroup containing $D$ as a subgroup of index $2$. We claim that under the assumptions on $q$, the second possibility cannot occur. In fact, in this case the normalizer would have to be a group of order $4d$, and since $d$ is even, its order would have to be divisible by $8$; however, the order of $\PSL_{2}(q)$ is not divisible by $8$ when $q$ is an odd power of $3$, so $\PSL_{2}(q)$ certainly cannot contain a subgroup with an order divisible by $8$. Thus $N_{\PSL_{2}(q)}(D)=D$. But $\beta$ belongs to the normalizer of $D$ in $\PSL_{2}(q)$, so then $\beta$ must lie in $D$. In particular, $\beta E \beta^{-1}=E$ since $E$ is normal in $D$. Thus, in either case we have $\beta E \beta^{-1}=E$, and since $\beta D \beta^{-1}=D$, also $\beta (D\backslash E) \beta^{-1}=D\backslash E$. Hence, (\ref{norma}) shows that $(\alpha,\beta)\in N_{C}(D_{2d})$. Hence also $N_{C}(C_{2}\times D)\leq N_{C}(D_{2d})$. 

To complete the proof of the first part, note that $D$ must lie in a maximal subgroup $D_{q\pm 1}$ of $\PSL_{2}(q)$ and $N_{\PSL_{2}(q)}(D)=N_{D_{q\pm 1}}(D)$. 

The second and third part of the lemma follow from Lemma~\ref{norma} applied to the dihedral subgroup $D$ of $D_{q\pm 1}$. In particular, $D$ is self-normalized in $D_{q\pm 1}$ if $(q\pm 1)/|D|$ is odd, and $N_{\PSL_{2}(q)}(D)$ is a dihedral subgroup of $D_{q\pm 1}$ of order $2|D|$ if $(q\pm 1)/|D|$ is even. Bear in mind that $(q-1)/2$ is odd, and $(q+1)/2$ is even but not divisible by $4$.

To establish the last part of the lemma, note that $N_{C}(D_{2d})\cong C_2\times D_{2d}$, except when $D\cong D_{2d}$, $2d\mid (q+1)$ and $(q+1)/2d$ is even. However, since $q \equiv 3 \bmod 8$, if $2d\mid (q+1)$ and $(q+1)/2d$ is even then $d$ must be odd. In other words, the situation described in the third part of the lemma cannot occur as this would require $d$ to be even. Thus, if $2d\mid (q+1)$ and $(q+1)/2d$ is even, then we are necessarily in the situation described in second part of the lemma, and so necessarily $N_{C}(D_{2d})\cong C_2\times D_{4d}$.
\end{proof}

Our next lemma investigates possible C-subgroups of $G=\Ree(q)$ of rank $3$. The vertex-figure of a putative regular $4$-polytope with automorphism group $G$ would have to be a regular polyhedron with a group of this kind.

\begin{lemma}
\label{rk3subs}
The only proper subgroups of $\Ree(q)$ that could have the structure of a $C$-group of rank $3$ are Ree subgroups $\Ree(q')$ with $q'\neq 3$ or subgroups of the form $\PSL_2(q')$, $C_2\times \PSL_2(q')$, or $\Ree(3)' \cong \PSL_2(8)$.
\end{lemma}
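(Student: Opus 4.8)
The plan is to go through the list of maximal subgroups of $\Ree(q)$ reproduced in Section~\ref{reebasics}, and for each one decide whether it can contain a subgroup that is a string C-group of rank $3$, i.e.\ a subgroup generated by three involutions $\rho_0,\rho_1,\rho_2$ with $\rho_0$ commuting with $\rho_2$. Since any rank $3$ C-subgroup of $\Ree(q)$ is contained in some maximal subgroup, it suffices to examine the maximal subgroups one at a time; and since a C-group must be generated by involutions, the first reduction is that we only care about the subgroup of each maximal subgroup generated by its involutions. This uses the ``odd-order complement'' observation already highlighted in the text: in a semidirect product $A:B$ with $|B|$ odd, every involution lies in $A$.

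First I would dispose of the ``easy'' maximal subgroups. The point stabilizer $N_G(A)\cong A:C_{q-1}$ has $A$ a $3$-group (so $q-1$ odd? no---$q-1$ is even, but $A$ has odd order $q^3$), hence every involution lies in the complement $C_{q-1}$, which is cyclic and therefore contains at most one involution; a group generated by at most one involution cannot be a rank $3$ C-group. For the three normalizers $N_G(A_i)$: in $N_G(A_1)\cong(C_2^{\,2}\times D_{(q+1)/2}):C_3$ the subgroup generated by involutions lies in $C_2^{\,2}\times D_{(q+1)/2}$, a group of the form $C_2^{\,2}\times D_{2m}$; one checks directly that such a group is not a string C-group of rank $3$ (for instance because in a rank $3$ string C-group $G_1=\langle\rho_0,\rho_2\rangle$ is $C_2\times C_2$ while $G_0$ and $G_2$ are dihedral, and the intersection property forces a structure incompatible with $C_2^{\,2}\times D_{2m}$, which has a central $C_2^{\,2}$)---this is a short finite-group argument, or can be cited from the Suzuki/known literature. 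For $N_G(A_2)\cong A_2:C_6$ and $N_G(A_3)\cong A_3:C_6$, with $A_i$ cyclic, the involutions again lie in $A_i:C_2$ where $A_i$ is cyclic, so the whole subgroup generated by involutions is dihedral or cyclic, and a (nontrivial) dihedral group is a rank $2$, not a rank $3$, C-group. This leaves exactly the block stabilizer $C_G(\rho)\cong C_2\times\PSL_2(q)$, the Ree subfield subgroups $\Ree(q')$ with $(q')^p=q$, $p$ prime, and (for the smallest field) we must note that $\Ree(3)$ is itself a subfield subgroup when $3\mid q$ other than via primes---actually $\Ree(3)$ appears when $q=3^p$, and since $\Ree(3)\cong\mathrm{P\Gamma L}_2(8)\cong\PSL_2(8):C_3$ has odd-order complement, its involutions generate $\PSL_2(8)\cong\Ree(3)'$; so a rank $3$ C-subgroup inside such a copy of $\Ree(3)$ actually lies in $\PSL_2(8)$.

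Next I would handle the surviving cases. The subgroup $C_G(\rho)\cong C_2\times\PSL_2(q)$ is of the form $C_2\times\PSL_2(q)$, and its proper subgroups that could be rank $3$ C-groups are (by the same kind of analysis, or simply by retaining it as one of the listed possibilities) of the form $C_2\times\PSL_2(q)$ itself, or $\PSL_2(q)$, or something smaller living in a maximal subgroup of $\PSL_2(q)$; but a maximal subgroup of $\PSL_2(q)$ in characteristic $3$ is again either a point-stabilizer $E_q:C_{(q-1)/2}$ (involutions generate at most $C_2$), a dihedral group $D_{q\pm1}$ (rank $\le 2$), $A_4$, $S_4$ (too small---$q\equiv3\bmod8$ forces $S_4\not\le\PSL_2(q)$, and $A_4\cong\PSL_2(3)$ is not a rank $3$ string C-group), or a subfield subgroup $\PSL_2(q')$ with $q'$ a smaller odd power of $3$. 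For the Ree subfield subgroups $\Ree(q')$, if $q'\neq3$ we keep it on the list; if $q'=3$, its involutions generate $\PSL_2(8)$, which we list separately as $\Ree(3)'\cong\PSL_2(8)$. Recursing through subfield chains in both the $\Ree$ and $\PSL_2$ directions terminates (the field order strictly decreases), so the only groups that survive as possible hosts of a rank $3$ C-subgroup are exactly those named in the statement: $\Ree(q')$ with $q'\neq3$, $\PSL_2(q')$, $C_2\times\PSL_2(q')$, and $\PSL_2(8)\cong\Ree(3)'$.

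The main obstacle is the bookkeeping in the last paragraph: making sure the recursion is set up cleanly so that every proper subgroup of $\Ree(q)$ is pushed, via ``pass to the subgroup generated by involutions, then to a containing maximal subgroup,'' into one of the four named families, without accidentally dropping a case (in particular the $\PSL_2$-subfield chain nested inside a $C_2\times\PSL_2(q)$, and the interaction $\Ree(3)\leadsto\PSL_2(8)\leadsto$ its own maximal subgroups $D_{14}, D_{18}, C_2^{\,3}:C_7,\dots$, all of rank $\le2$ or not C-groups of rank $3$). Everything else is a routine finite-group check that a dihedral group, a cyclic group, or a group with a nontrivial central $2$-subgroup of rank $\ge2$ cannot be a string C-group of rank $3$, together with the elementary fact that the relevant subfield and point-stabilizer structures have odd-order complements whose removal kills the ``extra'' generators. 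I would write the argument as a single sweep down the maximal-subgroup list, invoking Lemma~\ref{divd} and the structural facts about $\PSL_2(q)$ in characteristic $3$ where needed, and conclude that the four families in the statement are the only possibilities.
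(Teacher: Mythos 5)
Your overall strategy --- sweep through the list of maximal subgroups, pass to the subgroup generated by involutions via the odd-order-complement observation, and recurse through the subfield chains of $\Ree(q')$ and $\PSL_2(q')$ --- is essentially the route the paper takes (the paper compresses the elimination of the point stabilizers and the normalizers $N_G(A_i)$ into ``straightforward to verify'' and then carries out the same recursion on $C_2\times\PSL_2(q')$ and $\Ree(q')$). However, one of your case eliminations rests on a false claim. For the point stabilizer $N_G(A)\cong A:C_{q-1}$ you assert that ``every involution lies in the complement $C_{q-1}$,'' hence that the involutions generate at most a $C_2$. That is not what the odd-order-complement observation gives: here it is $A$, not the complement, that has odd order, so Schur--Zassenhaus only places each involution into \emph{some conjugate} of $C_{q-1}$, and there are many such conjugates. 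In fact $N_G(A)$ contains non-abelian dihedral subgroups $D_{2\cdot 3^k}$ --- the paper itself uses exactly such a subgroup $\langle\rho_1,\rho_2\rangle\cong D_{2n}$, with $n$ a power of $3$, inside a point stabilizer in the proof of Lemma~\ref{lemma4} --- so the involutions of $N_G(A)$ generate far more than a cyclic group. The conclusion you want is still true, but for a different reason: $|A:C_{q-1}|=q^3(q-1)$ with $q\equiv 3\bmod 8$, so the Sylow $2$-subgroup of $N_G(A)$ has order $2$ and cannot contain the Klein four-group $\langle\rho_0,\rho_2\rangle$ that every string C-group of rank $3$ must contain. (The same Sylow-$2$ count disposes of $N_G(A_2)\cong A_2:C_6$ and $N_G(A_3)\cong A_3:C_6$ more cleanly than your ``involutions lie in $A_i:C_2$'' argument.)

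A smaller point: your blanket statements that a dihedral group ``is a rank $2$, not a rank $3$, C-group'' and that a group with a central $C_2^{\,2}$ cannot be a rank $3$ string C-group are only correct once degenerate polyhedra (a $2$ in the Schl\"afli symbol) are excluded; for instance $D_{4k}\cong C_2\times D_{2k}$ with $k$ odd is the string C-group of type $\{2,k\}$. The paper makes this restriction explicit (``non-degenerate polyhedron, with no $2$ in the Schl\"afli symbol''), and it is legitimate in context because the lemma is applied to the vertex-figure and facet subgroups of a putative rank-$4$ polytope with simple automorphism group, whose Schl\"afli symbol can contain no $2$. You should state that hypothesis rather than rely on the unqualified claims. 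With these two repairs your argument lands on the same four families as the paper's proof.
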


\begin{proof}
It is straightforward to verify that only subgroups of maximal subgroups of $\Ree(q)$ of the second and third type can have the structure of a rank $3$ C-group. Therefore we are left with Ree subgroups $\Ree(q')$ and subgroups of groups $C_2\times \PSL_2(q')$, with $q'$ an odd power of $3$ dividing $q$, as well as subgroups of type $\Ree(3)'\cong \PSL_2(8)$ inside a subgroup $\Ree(3)$. A forward appeal to Lemma~\ref{lemma4} shows that Ree groups $R(q')$ with $q'\neq 3$ do in fact act flag-transitively on polyhedra, and by~\cite{SC94}, so does $\Ree(3)' \cong \PSL_2(8)$. The complete list of subgroups of $\PSL_2(q')$ is available, for instance, in~\cite{LS2005a}. As $q'$ is an odd power of 3, the group $\PSL_2(q')$ does not have subgroups isomorphic to $A_5$, $S_4$, or $\PGL_2(q'')$ for some~$q''$. Hence, none of the subgroups of $\PSL_2(q')$, except for those isomorphic to a group $\PSL_2(q'')$, with $q''$ an odd power of $3$ dividing $q'$ (and hence $q$), admits flag-transitive actions on polyhedra. Now the maximal subgroups of $C_2\times \PSL_{2}(q')$ comprise the factor $\PSL_2(q')$, as well as all subgroups of the form $C_2\times H$ where $H$ is a maximal subgroup of $\PSL_2(q')$ from the following list: 
\[E_{q'}\!:\!C_\frac{q'-1}{2},\; D_{q'-1},\; D_{q'+1},\; \PSL_2(q'').\] 
A subgroup of $C_2\times \PSL_2(q')$ of the form $C_2\times D_{q'-1}$ is isomorphic to $D_{2(q'-1)}$ (since $q'\equiv 3\bmod 8$), so none of its subgroups (including the full subgroup itself) can act regularly on a non-degenerate polyhedron (with no $2$ in the Schl\"afli symbol). Similarly, a subgroup of $C_2\times \PSL_2(q')$ of the form $C_2\times D_{q'+1}$ is isomorphic to $E_4\times D_{(q'+1)/2}$, so again none of its subgroups (including the full subgroup itself) can act regularly on a non-degenerate polyhedron. Finally, a subgroup of $C_2\times \PSL_2(q')$ of the forms $C_2\times (E_{q'}:C_{(q'-1)/2})$ has an order not divisible by~$4$. Hence, as in the two other cases, none of its subgroups (including the full subgroup itself) can act regularly on a non-degenerate polyhedron.

In summary, the only possible candidates for rank $3$ subgroups of $\Ree(q)$ are of the form $\Ree(q')$, $\PSL_2(q')$, $C_2\times \PSL_2(q')$, and $\Ree(3)' \cong \PSL_2(8)$. We can further rule out a subgroup of type $\Ree(3)$, since $\Ree(3)\cong P\Gamma L_2(8)$ is not generated by involutions.
\end{proof}

For a subgroup $B$ in a group $A$ we define $N_{A}^{0}(B):=\langle a\mid a\in N_{A}(B), a^{2}=1\rangle$. If $B$ is generated by involutions then $B\leq N_{A}^{0}(B)\leq N_{A}(B)$.
We first state a lemma that will be useful in several places.

\begin{lemma}~\label{psl28}
Let $H:=\Ree(3)=P\Gamma L_{2}(8)$, and let $D:=D_{2d}$ be a dihedral subgroup of $H$ of order at least 6.
Then $d = 3$, $7$ or $9$, and in all cases $N^0_H(D) = D$.
\end{lemma}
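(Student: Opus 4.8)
The plan is to work entirely inside $H:=\Ree(3)\cong\PGammaL_2(8)\cong\PSL_2(8)\!:\!C_3$, using the known subgroup structure of $\PSL_2(8)$ together with the order $|H|=1512=2^3\cdot 3^3\cdot 7$. First I would determine which dihedral groups $D_{2d}$ with $d\geq 3$ can possibly embed in $H$. Since $|H|$ is divisible by $8$ but not by $16$, and a dihedral group $D_{2d}$ has order $2d$, the constraint $2d\mid 1512$ together with the observation that $D_{2d}$ for $d$ even contains a Klein four-group only leaves a handful of candidates; more precisely, a non-abelian dihedral subgroup of $H$ either lies in the index-$3$ subgroup $\PSL_2(8)$ or projects nontrivially onto $C_3$. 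A dihedral group is generated by involutions, and by the repeated observation in the paper (an involution in $A\!:\!B$ with $|B|$ odd lies in $A$), every involution of $H=\PSL_2(8)\!:\!C_3$ lies in $\PSL_2(8)$; hence $D\leq\PSL_2(8)$ automatically. Now the dihedral subgroups of $\PSL_2(8)$ are well known: since $|\PSL_2(8)|=504=2^3\cdot3^2\cdot7$, its dihedral subgroups have order dividing $2(8-1)=14$ or $2(8+1)=18$, and a Sylow $2$-subgroup is elementary abelian $E_8$ (so the only $2$-dihedral pieces are $D_4=E_4$, which is excluded as it is abelian). This gives exactly $D_6$, $D_{14}$, $D_{18}$, i.e. $d\in\{3,7,9\}$, which is the first assertion.

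For the normalizer statement I would compute $N_H(D)$ in each of the three cases and then extract the subgroup $N^0_H(D)$ generated by the involutions it contains. For $d=7$: $D_{14}\leq\PSL_2(8)$ is the normalizer of a Sylow $7$-subgroup inside $\PSL_2(8)$, and in fact $D_{14}$ is a maximal subgroup of $\PSL_2(8)$ which is self-normalized there; passing to $H$, the $C_3$ on top normalizes a Sylow-$7$ but does not normalize this particular $D_{14}$ (the three outer cosets permute the relevant structure), so $N_H(D_{14})$ has order $14$ or $42$; since any normalizing element of order $3$ would give a subgroup $D_{14}\!:\!C_3$ inside $H$, and one checks from the list of subgroups of $\PSL_2(8)$ and the action of the field automorphism that this does not occur, we get $N_H(D_{14})=D_{14}=D$, whence $N^0_H(D)=D$. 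For $d=9$: $D_{18}$ contains a Sylow $3$-subgroup $C_9$ of $H$; the normalizer of $C_9$ in $\PSL_2(8)$ is $D_{18}$ itself, and in $H$ the normalizer of $C_9$ is $C_9\!:\!C_6$ of order $54$, so $N_H(D_{18})$ lies between $D_{18}$ and $C_9\!:\!C_6$; but $N_H(D_{18})$ must normalize the characteristic subgroup $C_9$ and hence lies in $C_9\!:\!C_6=N_H(C_9)$, and within $C_9\!:\!C_6$ the normalizer of $D_{18}$ is exactly $D_{18}$ (an element inverting $C_9$ together with an outer order-$3$ element would again require $8\mid|N|$, impossible), so $N_H(D_{18})=D_{18}$ and in particular $N^0_H(D)=D$. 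For $d=3$: $D_6\cong S_3$ sits inside $\PSL_2(8)$; here one uses that a Sylow $3$-subgroup of $\PSL_2(8)$ is $C_9$ (cyclic), so an order-$3$ element of $D_6$ generates a subgroup not normal in its Sylow, and the normalizer of $C_3$ in $\PSL_2(8)$ is $D_{18}$, inside which the copies of $D_6$ are self-normalized; checking the outer $C_3$ as before again contributes nothing divisible by the needed $2$-power, giving $N_H(D_6)=D_6$ and $N^0_H(D)=D$.

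A cleaner unified argument, which is probably the one I would actually write: in all three cases $D=D_{2d}$ contains a Sylow $r$-subgroup of $H$ for $r=7$ ($d=7$) or a subgroup $C_3$ that is "large" in its Sylow context for $r=3$ ($d=3,9$); in any event the cyclic subgroup $C_d\trianglelefteq D$ is characteristic, so $N_H(D)\leq N_H(C_d)$. One then reads off $N_H(C_7)$ and $N_H(C_3)$, $N_H(C_9)$ from the subgroup structure of $H$: $N_H(C_7)=D_{14}$, $N_H(C_9)=C_9\!:\!C_6$, and $N_H(C_3)=D_{18}$ (since $C_3$ char $C_9$ gives $N_H(C_3)\leq N_H(C_9)$ and one computes inside $C_9\!:\!C_6$). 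In each case $N_H(C_d)$ is a $\{2,3,7\}$-group whose Sylow $2$-subgroup has order $2$, so $N_H(D)$ has order $2d$, $2d$, or at most $2d$ after intersecting with the condition that it normalizes $D$ and not merely $C_d$ — in fact one sees directly that $D$ is self-normalized in $N_H(C_d)$ — hence $N_H(D)=D$ and a fortiori $N^0_H(D)=D$.

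The main obstacle is the $d=3$ case, where $D_6=S_3$ is small and abundant in $H$: one must be careful to rule out the possibility that some outer (order-$3$) element of $\PGammaL_2(8)$ normalizes a given $S_3$, or that $S_3$ extends to $C_3\!:\!C_6\cong D_{18}$ or to some $S_3\times C_k$-type overgroup whose involutions would enlarge $N^0_H(D)$. I would handle this by the characteristic-subgroup reduction $N_H(D_6)\leq N_H(C_3)=D_{18}$ and then invoke Lemma~\ref{norma} (with $n=9$, $m=3$, $n/m=3$ odd) to conclude $N_{D_{18}}(D_6)=D_6$; combined with the fact that outer elements cannot contribute any new involution (all involutions of $H$ lie in $\PSL_2(8)$, and inside $\PSL_2(8)$ the copies of $D_6$ are self-normalized), this closes the argument. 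The cases $d=7,9$ are then immediate by the same characteristic-subgroup trick plus an order count, since $N_H(C_7)=D_{14}$ and $N_H(C_9)=C_9\!:\!C_6$ contain no $D_{28}$ or $D_{36}$ (no element of order $8$ or $4$ dividing their orders appropriately — indeed $4\nmid|H|$-free dihedral extensions would be forced).
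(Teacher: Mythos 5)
Your first paragraph is correct and complete: every involution of $H=\PSL_2(8)\!:\!C_3$ lies in the normal subgroup $\PSL_2(8)$ (odd complement), so $D\leq\PSL_2(8)$, and the dihedral subgroups of $\PSL_2(8)$ of order at least $6$ are exactly $D_6$, $D_{14}$, $D_{18}$, giving $d\in\{3,7,9\}$. (The paper's own proof is just ``Straightforward,'' so there is no argument to compare against.)

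The normalizer computations, however, are wrong in all three cases, and the arguments offered for them would fail. Since $C_d$ is characteristic in $D_{2d}$, and conversely $N_H(C_d)$ normalizes $N_{\PSL_2(8)}(C_d)$ (because $\PSL_2(8)\trianglelefteq H$), one finds $N_H(D_{14})=N_H(C_7)\cong C_7\!:\!C_6$ of order $42$ and $N_H(D_{18})=N_H(C_9)\cong C_9\!:\!C_6$ of order $54$; both $7\!:\!6$ and $9\!:\!6$ are in fact \emph{maximal} subgroups of $\PGammaL_2(8)$, so your claim that a subgroup $D_{14}\!:\!C_3$ ``does not occur'' in $H$ is false, as is the assertion that $D_{18}\!:\!C_3$ ``would require $8\mid|N|$'' (its order is $54$). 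Likewise $N_H(D_6)$ has order $18$, not $6$: the order-$3$ elements of the complement $C_6$ in $\mathrm{Hol}(C_9)=9\!:\!6$ centralize $C_3$ and commute with inversion, hence normalize each $D_6$. What saves the lemma is precisely the remark you make only in passing at the end of the $d=3$ discussion: $N^0_H(D)$ is generated by \emph{involutions} of $N_H(D)$, all of which lie in $\PSL_2(8)$, so $N^0_H(D)\leq N_{\PSL_2(8)}(D)$. Now $D_{14}$ and $D_{18}$ are maximal in the simple group $\PSL_2(8)$ and hence self-normalized there, and $N_{\PSL_2(8)}(D_6)=N_{D_{18}}(D_6)=D_6$ (each $C_3$ lies in a unique $C_9$ since the subgroups $C_9$ intersect trivially, so $N_{\PSL_2(8)}(C_3)=D_{18}$, and then Lemma~\ref{norma} applies with $n/m=3$ odd). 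That short argument is the correct proof; the intermediate claims $N_H(D)=D$ should be removed, as they are false and the reasoning given for them does not hold up.
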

\begin{proof}
Straightforward.
\end{proof}

The following lemma considerably limits the ways in which Ree groups $\Ree(q)$ might be representable as C-groups of rank $4$.

\begin{lemma}~\label{lemma3}
If the group $G:=\Ree(q)$ can be represented as a string C-group of rank 4, then 
\begin{equation}
\label{enzero}
N_{G}^{0}(G_{01})=N^0_{C_G(\rho_0)}(G_{01}).
\end{equation} 
\end{lemma}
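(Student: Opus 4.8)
The plan is to analyze the structure of $G_{01}=\langle\rho_2,\rho_3\rangle$ and track where its normalizer lives. Since $(G,S)$ is a string C-group of rank $4$, the generator $\rho_0$ commutes with $\rho_2$ and $\rho_3$, hence $\rho_0$ centralizes $G_{01}$; thus $\rho_0\in C_G(G_{01})\leq N_G(G_{01})$. Moreover $G_{01}=\langle\rho_2,\rho_3\rangle$ is a dihedral group $D_{2d}$ with $d\geq 3$ (it is non-abelian, since if $\rho_2,\rho_3$ commuted the diagram would not be connected in the right way and $G$ would fail to be generated irreducibly — more precisely $\rho_2\rho_3$ has period $\geq 3$ because otherwise $G=\langle\rho_0,\rho_1\rangle\times\langle\rho_2,\rho_3\rangle$ contradicting simplicity, exactly as in the proof of Lemma~\ref{lemma2}). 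So $G_{01}$ is a non-abelian dihedral subgroup of $G=\Ree(q)$.

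The key step is to locate $G_{01}$ inside a maximal subgroup of $G$ and identify $N_G(G_{01})$. By Lemma~\ref{rk3subs} and the list of maximal subgroups, a non-abelian dihedral subgroup of $\Ree(q)$ is contained in a block stabilizer $C_G(\rho)\cong C_2\times\PSL_2(q)$ for some involution $\rho$, or in a point stabilizer $A:C_{q-1}$, or in a subgroup $N_G(A_i)$, or in a Ree subgroup $\Ree(q')$. The plan is: first, since $G_{01}$ is centralized by the involution $\rho_0$, we have $\rho_0\in C_G(G_{01})$; I want to show $C_G(G_{01})$ forces $G_{01}$ to sit inside $C_G(\rho_0)$ in the only possible way. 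Concretely, $\rho_0$ is an involution, so $C_G(\rho_0)\cong C_2\times\PSL_2(q)$ with $C_2=\langle\rho_0\rangle$, and $G_{01}\leq C_G(\rho_0)$ automatically since $\rho_0$ centralizes $G_{01}$. Now apply Lemma~\ref{normc2psl}(d): the structure of $N_{C_G(\rho_0)}(G_{01})$ depends only on $d$ and $q$. The real content is to show that the full normalizer $N_G(G_{01})$ contributes no involutions beyond those already in $N_{C_G(\rho_0)}(G_{01})$; equivalently, every involution normalizing $G_{01}$ already centralizes $\rho_0$ (or lies in $C_G(\rho_0)$).

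For this I would argue as follows. Let $\tau$ be any involution in $N_G(G_{01})$. Then $\tau\in C_G(\sigma)\cong C_2\times\PSL_2(q)$ for the unique involution $\sigma$ whose fixed block it determines; I want to compare this with $\rho_0$. The point is that $G_{01}=D_{2d}$ with $d\geq 3$ has a unique cyclic subgroup $C_d$ of index $2$ (when $d>2$), and any element normalizing $G_{01}$ normalizes this $C_d$. If $d$ is odd, then inside $C_G(\rho_0)=C_2\times\PSL_2(q)$ the subgroup $G_{01}$ either maps isomorphically onto a dihedral subgroup $D_{2d}$ of $\PSL_2(q)$ or equals $C_2\times D_d$ — but by Lemma~\ref{normc2psl} and the congruence $q\equiv 3\bmod 8$, the self-normalizing analysis pins down $N_{C_G(\rho_0)}(G_{01})\cong C_2\times D_{2d}$ or $C_2\times D_{4d}$. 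To see these exhaust $N_G(G_{01})$, I would use that $C_G(G_{01})$ must contain $\langle\rho_0\rangle$, and that in $\Ree(q)$ the centralizer of a non-abelian dihedral $D_{2d}$ is small — it is contained in the centralizer of the central-type cyclic part, which by the subgroup structure (the $N_G(A_i)$ and the $C_2\times\PSL_2(q)$ from Kleidman's list) leaves no room for an involution outside $C_G(\rho_0)$. One then checks $N_G(G_{01})\leq N_G(C_G(G_{01})) $-type constraints or directly that $N_G(G_{01})$ embeds in the normalizer of the relevant cyclic subgroup $A_i$ or in $C_G(\rho)$, all of whose involutions lie in the $\PSL_2$-type factor, forcing $N_G^0(G_{01})\leq C_G(\rho_0)$, whence $N_G^0(G_{01})=N_{C_G(\rho_0)}^0(G_{01})$, and the latter equals $N^0_{C_G(\rho_0)}(G_{01})$ as written in \eqref{enzero}.

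The main obstacle I expect is the case analysis for which maximal subgroup of $\Ree(q)$ actually contains $G_{01}$ together with a purported extra normalizing involution: the subgroups $N_G(A_i):C_6$ and $N_G(A_1)\cong(C_2^2\times D_{(q+1)/2}):C_3$ could in principle harbor dihedral subgroups whose normalizer has involutions not centralizing $\rho_0$, and ruling these out requires carefully combining Lemma~\ref{norma}, Lemma~\ref{normc2psl}, the divisibility constraints $d\not\equiv 0\bmod 4$ and ``$d$ even $\Rightarrow 2d\mid q+1$'' from Lemma~\ref{divd}, and the fact that $|\PSL_2(q)|$ and hence relevant sections are not divisible by $8$. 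Once it is established that any non-abelian dihedral $D_{2d}$ of $G$ has all of its normalizing involutions lying in a single conjugate of $C_2\times\PSL_2(q)$, namely $C_G(\rho_0)$, the identity \eqref{enzero} follows immediately.
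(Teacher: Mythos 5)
Your overall strategy---locate $G_{01}$ inside $C_G(\rho_0)\cong C_2\times\PSL_2(q)$, observe that $N_{C_G(\rho_0)}(G_{01})$ has order divisible by $4$ and is generated by involutions, and then argue that the full normalizer $N_G(G_{01})$, which must sit inside some maximal subgroup $M$ of $G$ of order divisible by $4$, contributes no new involutions---is the same as the paper's. But two essential pieces are missing, and one of them is the hardest part of the actual proof.

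First, you never handle the case $M\cong\Ree(q')$. Your own enumeration of where a non-abelian dihedral subgroup can live includes the Ree subgroups, and among the maximal subgroups of order divisible by $4$ the types are exactly $C_2\times\PSL_2(q)$, $N_G(A_1)$ and $\Ree(q')$; yet your ``main obstacle'' paragraph only discusses the first two. The paper has to run a descent through a chain of smaller Ree subgroups (each time the normalizer either falls back into a subgroup of type $C_2\times\PSL_2(\cdot)$ or $N_{\cdot}(A_1)$, or into a yet smaller Ree subgroup), terminating either when $2d$ no longer divides the relevant $q^{(k)}\pm 1$ or at a subgroup $\Ree(3)\cong\PSL_2(8){:}C_3$, where Lemma~\ref{psl28} is invoked to get $N_G^0(G_{01})=G_{01}$. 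Without this case the proof is incomplete. Second, even in the cases you do discuss, the step ``all of whose involutions lie in the $\PSL_2$-type factor, forcing $N_G^0(G_{01})\leq C_G(\rho_0)$'' is not justified: an involution $\tau\in N_G(G_{01})$ lying in \emph{some} conjugate $C_G(\sigma)\cong C_2\times\PSL_2(q)$ need not lie in $C_G(\rho_0)$, and your attempt to ``compare $\sigma$ with $\rho_0$'' is never carried out. The paper's mechanism is different and sharper: by Lemma~\ref{normc2psl}(d) the normalizer of $G_{01}$ in \emph{any} subgroup of type $C_2\times\PSL_2(q)$ containing it is isomorphic to $N_{C_G(\rho_0)}(G_{01})$; combined with the containment $N_{C_G(\rho_0)}(G_{01})\leq N_G(G_{01})=N_M(G_{01})$ this forces equality of finite groups of the same order, rather than an a priori containment of involutions in $C_G(\rho_0)$. (A similar two-step containment chain, passing through $K=C_2\times D_{q+1}$ and the centralizer of its central involution, is what disposes of the $N_G(A_1)$ case.) Your appeal to the smallness of $C_G(G_{01})$ does not substitute for either of these arguments.
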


\begin{proof}
Suppose that $G$ admits a representation as a string C-group of rank $4$. Thus 
\[G= \langle\rho_0,\rho_1,\rho_2,\rho_3\rangle.\]
Since $\Ree(3)$ is not generated by involutions, we must have $q\neq 3$.

The subgroup $G_{01}=\langle\rho_2,\rho_3\rangle$ is a dihedral subgroup $D_{2d}$ (say) of the centralizer $C_G(\rho_0)$ of $\rho_0$, and $C_G(\rho_0)\cong\langle\rho_0\rangle\times \PSL_{2}(q)$. Here $d\geq 3$, by arguments similar to those used in the proof of Lemma~\ref{lemma2}. Thus
\begin{equation}
\label{d2d}
D_{2d} \cong \langle\rho_2,\rho_3\rangle = G_{01} \leq G_{1}=\langle\rho_0,\rho_2,\rho_3\rangle \leq C_G(\rho_0) \cong C_2\times \PSL_{2}(q).
\end{equation}
By Lemma~\ref{normc2psl} applied to $G_{01}$ and $C_{G}(\rho_0)$, there exists a dihedral subgroup $D$ in the $\PSL_{2}(q)$-factor of $C_G(\rho_0)$ such that $G_{01}$ is a subgroup of $\langle\rho_0\rangle\times D = C_{2}\times D$ of index at most  $2$ and 
\[ N_{C_G(\rho_0)}(G_{01})=N_{C_G(\rho_0)}(C_{2}\times D)=C_{2}\times N_{PSL_{2}(q)}(D).\]
In fact, the proof of Lemma~\ref{normc2psl} shows that this subgroup $C_{2}\times D$ is just given by $G_1$. But $\rho_{0}\not\in G_{01}$, so $G_{01}$ has index $2$ in $C_{2}\times D=G_{1}$, and $D\cong G_{01}\cong D_{2d}$. Then Lemma~\ref{divd}, applied to $D$, shows that $2d$ must divide either $q+1$ or $q-1$. 

The structure of the normalizer $N_{C_G(\rho_0)}(G_{01})$ can be obtained from Lemma~\ref{normc2psl}. In fact, $N_{C_G(\rho_0)}(G_{01})\cong C_2\times D_{2d}$, unless $2d\mid (q+1)$ and $(q+1)/2d$ is even; in the latter case $N_{C_G(\rho_0)}(G_{01})\cong C_2\times D_{4d}$. In particular, $N_{C_G(\rho_0)}(G_{01})$ is generated by involutions and its order is divisible by $4$. We will show that the normalizer of $G_{01}$ in $C_G(\rho_0)$ captures all the information about the full normalizer $N_{G}(G_{01})$ of $G_{01}$ in $G$ that is relevant for us. A key step in the proof is the invariance of the structure of the normalizer of $G_{01}$ in arbitrary subgroups of $G$ of type $C_{2}\times \PSL_{2}(q)$; more precisely, the structure only depends on $d$ and $q$, not on the way in which $G_{01}$ is embedded in a subgroup $C_{2}\times \PSL_{2}(q)$ (see Lemma~\ref{normc2psl}). 

The full normalizer $N_{G}(G_{01})$ of $G_{01}$ in $G$ must certainly contain $N_{C_G(\rho_0)}(G_{01})$ and also have an order divisible by $8$. We claim that all involutions of the full normalizer $N_{G}(G_{01})$ must already lie in $C_{G}(\rho_0)$ and hence in $N_{C_G(\rho_0)}(G_{01})$.

First note that $N_{G}(G_{01})$ must certainly lie in a maximal subgroup $M$ of $G$ and then coincide with $N_{M}(G_{01})$. (Since $G$ is simple, the normalizer of a proper subgroup of $G$ cannot coincide with $G$.) Inspection of the list of maximal subgroups of $G$ shows that only maximal subgroups $M$ of type $R(q')$, $C_{2}\times \PSL_{2}(q)$ or $N_{G}(A_1)$ have an order divisible by $4$. Only those maximal subgroups could perhaps contain $N_{C_G(\rho_0)}(G_{01})$ and hence $N_{G}(G_{01})$. We investigate the three possibilities for $M$ separately.

Suppose $M$ is a group of type $C_{2}\times \PSL_{2}(q)$. Then the invariance of the structure of the normalizer of $G_{01}$ shows that $N_{M}(G_{01})\cong N_{C_G(\rho_0)}(G_{01})$. However, $N_{C_G(\rho_0)}(G_{01})\leq N_{G}(G_{01})$ and $N_{G}(G_{01})=N_{M}(G_{01})$, so this gives $N_{G}(G_{01})=N_{C_G(\rho_0)}(G_{01})$. But $N_{C_G(\rho_0)}(G_{01})$ is generated by involutions, so $N_{C_G(\rho_0)}(G_{01})=N_{C_G(\rho_0)}^{0}(G_{01})$ and (\ref{enzero}) must hold as well.

Let $M$ be a group of type $N_{G}(A_1)\cong (C_{2}^{2}\times D_{(q+1)/2}):C_{3}$ where $A_1$ is a group $C_{(q+1)/4}$ (recall that $(q+1)/4$ is odd). Then 
all involutions of $M$ must lie in its subgroup $K:=C_{2}^{2}\times D_{(q+1)/2}=C_{2}\times D_{q+1}$. In particular, all involutions of $N_{M}(G_{01})$ must lie in $K$ and hence in $N_{K}(G_{01})$; that is, $N_{M}^{0}(G_{01})\leq N_{K}(G_{01})$. Also, $G_{01}$ itself must lie in $K$ and its order $2d$ must divide $q+1$. The subgroup $K$ lies in the centralizer $C$ of the involution generating the $C_2$-factor in the direct product factorization $C_{2}\times D_{q+1}$ for $K$, and $N_{K}(G_{01})\leq N_{C}(G_{01})$. This subgroup $C$ is of type $C_{2}\times \PSL_{2}(q)$, and so again the invariance of the structure of the normalizers implies that $N_{C}(G_{01})\cong N_{C_G(\rho_0)}(G_{01})$. But $N_{G}(G_{01})=N_{M}(G_{01})$ and therefore  
\[N_{C_G(\rho_0)}(G_{01})=N_{C_G(\rho_0)}^{0}(G_{01})\leq N_{G}^{0}(G_{01})=N_{M}^{0}(G_{01})\leq N_{K}(G_{01})\leq N_{C}(G_{01}).\]
Thus $N_{G}^{0}(G_{01})=N_{C_G(\rho_0)}^0(G_{01})$, as required.

Now let $M$ be a Ree group $\Ree(q')$ where $(q')^p= q$ and $p$ is a prime. We first cover the case when $M$ is a Ree group $\Ree(3)=\PSL_{2}(8):C_3$, that is, $q=3^p$ where $p$ is a prime.
In that case, by Lemma~\ref{psl28}, $N^0_G(G_{01}) = N^0_M(G_{01}) = G_{01}$. 
Hence, since also $G_{01}\leq N_{C_G(\rho_0)}^0(G_{01})\leq N^0_G(G_{01})$, we must have
$N^0_G(G_{01})\leq N_{C_G(\rho_0)}^0(G_{01})$.
  
Now suppose $q'\neq 3$, so in particular $M$ is simple. Then $2d$ must divide $q'\pm 1$, since $N_{C_G(\rho_0)}(G_{01})$ lies in $M$ and therefore $\rho_0\in M$, giving $G_{01}\leq N_{C_M(\rho_0)}(G_{01})\cong C_{2}\times \PSL_{2}(q')$. Since the subgroup $N_{G}(G_{01})$ of $M$ must have an order divisible by $4$, it must lie in a maximal subgroup $M'$ of $M$ of type $\Ree(q'')$, $C_{2}\times \PSL_{2}(q'')$, or $N_{\Ree(q')}(A_1')$ with $A_{1}'\cong C_{(q'+1)/4}$. The maximal subgroups $M'$ of $M=\Ree(q')$ of types $C_{2}\times \PSL_{2}(q')$ and $N_{\Ree(q')}(A_{1}')$, respectively, lie in maximal subgroups of $G$ of type $C_{2}\times \PSL_{2}(q)$ or $N_{G}(A_{1})$, so they are subsumed under the previous discussion. (Alternatively we could dispose of these cases for $M'$ directly, using arguments very similar to those in the two previous cases for $M$.) Then this leaves the possibility that $M'$ is of type $R(q'')$, in which case we are back at a Ree group. Now continuing in this fashion to smaller and smaller Ree subgroups that could perhaps contain $N_{G}(G_{01})$, we eventually arrive at either a Ree subgroup $M^{(k)}$ (say) whose parameter $q^{(k)}\pm 1$ (say) is no longer divisible by $2d$, or a Ree group $\Ree(3)$. In the first case, $\Ree(q')$ does not contribute anything new to $N_{G}^{0}(G_{01})$, and the normalizer $N_{G}(G_{01})$ must already lie in one of the maximal subgroups of type $C_{2}\times \PSL_{2}(q)$ or $N_{G}(A_1)$ discussed earlier; in particular, $N_{G}^{0}(G_{01})=N_{C_G(\rho_0)}^{0}(G_{01})$, as required. In the second case, the normalizer  $N_{G}(G_{01})$ lies in a Ree subgroup $\Ree(3)\cong \PSL_{2}(8):C_3$, and its involutory part $N_{G}^0(G_{01})$ must lie in the $\PSL_{2}(8)$ subgroup. 
Again, by Lemma~\ref{psl28}, we have $N^0_G(G_{01}) = N^0_M(G_{01}) = G_{01}$, hence (\ref{enzero}) must also hold in this case.
\end{proof}

\begin{lemma}~\label{lemma3a}
If the group $G:=\Ree(q)$ can be represented as a string C-group of rank 4, then $q\neq 3$ and both the facet stabilizer $G_3$ and vertex stabilizer $G_0$ have to be isomorphic to $\PSL_2(8)=\Ree(3)'$ (i.e. the commutator subgroup of $\Ree(3)$) or a simple Ree group $\Ree(q')$ with $q = q'^m$ for some odd integer $m$. 
\end{lemma}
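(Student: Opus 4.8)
The plan is to combine the structural restriction from Lemma~\ref{lemma3} with the classification of rank~$3$ C-subgroups from Lemma~\ref{rk3subs}. First I would note that $q\neq 3$, since $\Ree(3)\cong\PGammaL_2(8)$ is not generated by involutions (this was already used in Lemma~\ref{lemma3}). Suppose $G=\Ree(q)=\langle\rho_0,\rho_1,\rho_2,\rho_3\rangle$ is a string C-group of rank~$4$. The facet stabilizer $G_3=\langle\rho_0,\rho_1,\rho_2\rangle$ and the vertex stabilizer $G_0=\langle\rho_1,\rho_2,\rho_3\rangle$ are proper subgroups of $G$ generated by three involutions satisfying the string relation $(\rho_0\rho_2)^2=1$ (resp.\ $(\rho_1\rho_3)^2=1$) and the intersection property inherited from $(G,S)$; hence each is itself a string C-group of rank~$3$. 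By Lemma~\ref{rk3subs}, each of $G_0,G_3$ is therefore isomorphic to one of $\Ree(q')$ with $q'\neq 3$, $\PSL_2(q')$, $C_2\times\PSL_2(q')$, or $\Ree(3)'\cong\PSL_2(8)$, where $q'$ runs over odd powers of $3$ dividing $q$.

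The main work is to eliminate the two families $\PSL_2(q')$ and $C_2\times\PSL_2(q')$, so that only $\Ree(q')$ (with $q'\neq 3$) and $\PSL_2(8)=\Ree(3)'$ survive; this is exactly the claimed conclusion since $\PSL_2(8)\cong\Ree(3)'$ and a simple Ree subgroup $\Ree(q')$ of $\Ree(q)$ forces $q=q'^m$ with $m$ odd. I would treat $G_3$ and $G_0$ by the same argument, so describe it for $G_0$. Inside $G_0$ the subgroup $G_{01}=\langle\rho_2,\rho_3\rangle$ is a non-abelian dihedral group $D_{2d}$ with $d\geq 3$ (as in Lemma~\ref{lemma3}), and $\rho_1$ is an involution normalizing $G_{01}$ but not lying in it (otherwise $G_{01}=G_0$, contradicting the intersection property $\langle\rho_2,\rho_3\rangle\cap\langle\rho_1,\rho_2,\rho_3\rangle$ being proper — more precisely $\rho_1\in G_{01}$ would collapse the rank). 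Hence $N_{G_0}^{0}(G_{01})$ strictly contains $G_{01}$. Now I invoke Lemma~\ref{lemma3}: $N_G^{0}(G_{01})=N_{C_G(\rho_0)}^{0}(G_{01})$, and by Lemma~\ref{normc2psl}(d) and the computation in Lemma~\ref{lemma3} this group is $C_2\times D_{2d}$ or $C_2\times D_{4d}$, in either case of order divisible by~$4$ (indeed by~$8$). Since $G_0\leq N_G(G_{01})$ forces every involution of $G_0$ that normalizes $G_{01}$ to lie in $N_G^{0}(G_{01})$, and $\rho_1,\rho_2,\rho_3$ all do, we get $\langle\rho_1,\rho_2,\rho_3\rangle$-worth of normalizing involutions inside a group whose involutory part has order divisible by~$8$; in particular $N_{G_0}^{0}(G_{01})$ has order divisible by~$8$.

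This is where the candidates $\PSL_2(q')$ and $C_2\times\PSL_2(q')$ break down. If $G_0\cong\PSL_2(q')$ with $q'$ an odd power of~$3$, then $|\PSL_2(q')|=\tfrac12 q'(q'-1)(q'+1)$ is not divisible by~$8$, since $q'\equiv 3\bmod 8$ makes exactly one of $q'-1,q'+1$ divisible by~$4$ and the other only by~$2$, while dividing by~$2$ removes a factor~$2$; so $\PSL_2(q')$ cannot contain $N_{G_0}^0(G_{01})$, a contradiction. If $G_0\cong C_2\times\PSL_2(q')$, the same count gives $|G_0|$ divisible by~$8$ but, by Lemma~\ref{normc2psl} applied inside $C_2\times\PSL_2(q')$, the normalizer $N_{G_0}(G_{01})$ is again of type $C_2\times D_{2d}$ or $C_2\times D_{4d}$ and hence $N_{G_0}^0(G_{01})$ already equals $N_{C_G(\rho_0)}^0(G_{01})$; but then $G_0=\langle\rho_1,\rho_2,\rho_3\rangle\leq N_{G_0}^0(G_{01})\cong C_2\times D_{2d}$ (or $C_2\times D_{4d}$), which is impossible because a rank~$3$ string C-group acting on a non-degenerate polyhedron cannot be a (virtually) dihedral group — it would force a $2$ in the Schläfli symbol, contradicting $d\geq 3$ together with $G_0$ acting string-C-faithfully. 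Thus both $\PSL_2(q')$ and $C_2\times\PSL_2(q')$ are excluded, leaving $G_0$ (and symmetrically $G_3$) isomorphic to $\PSL_2(8)=\Ree(3)'$ or to a simple $\Ree(q')$; in the latter case $q'\neq 3$ and $q=q'^m$ with $m$ odd, as claimed.

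I expect the main obstacle to be the $C_2\times\PSL_2(q')$ case: one must argue carefully that $N_{G_0}^0(G_{01})$ cannot be large enough inside $C_2\times\PSL_2(q')$ to contain a full rank-$3$ generating set, using the precise dihedral structure from Lemma~\ref{normc2psl} rather than a crude order bound. The $\PSL_2(q')$ case is an immediate divisibility obstruction, and the surviving cases require only the trivial observation that $\Ree(3)$ itself is not generated by involutions (so it cannot appear, only its commutator subgroup $\PSL_2(8)$ can), which is already recorded in Lemma~\ref{rk3subs}.
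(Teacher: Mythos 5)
There is a genuine gap, and it sits at the heart of your elimination argument. You assert that $\rho_1$ normalizes $G_{01}=\langle\rho_2,\rho_3\rangle$ and, from this, that $G_0=\langle\rho_1,\rho_2,\rho_3\rangle\leq N_G(G_{01})$. In a string C-group $\rho_1$ commutes with $\rho_3$ but \emph{not} with $\rho_2$ (their product has order at least $3$), and by the intersection property $\rho_1\rho_2\rho_1\in G_{01}$ would force $\rho_1\rho_2\rho_1\in\langle\rho_1,\rho_2\rangle\cap\langle\rho_2,\rho_3\rangle=\langle\rho_2\rangle$, i.e.\ $\rho_1$ and $\rho_2$ commute, a contradiction. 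So $\rho_1$ does \emph{not} normalize $G_{01}$; the generator that centralizes $G_{01}$ is $\rho_0$. Worse, if your claim were true it would make $G_{01}$ a proper nontrivial normal subgroup of $G_0$, which is impossible exactly in the cases the lemma is meant to allow ($G_0\cong\PSL_2(8)$ or a simple $\Ree(q')$), so the argument proves too much. With that claim gone, your assertion that $N_{G_0}^0(G_{01})$ has order divisible by $8$ has no support (and the auxiliary claim that $C_2\times D_{2d}$ has order divisible by $8$ is itself false when $d$ is odd, which does occur: $G_{01}\cong D_{14}$ in Lemma~\ref{lemma3b}). Both the $\PSL_2(q')$ and the $C_2\times\PSL_2(q')$ eliminations therefore collapse.

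The mechanism the paper actually uses is Lemma~\ref{lemma2b}: since $G$ is simple, $\rho_0$ is an involution lying in $N_G(G_{01})$ (it centralizes $G_{01}$) but not in $N_G(G_0)$. One then rules out $G_0\cong\PSL_2(q')$ and $G_0\cong C_2\times\PSL_2(q')$ by showing that for these candidates \emph{every} involution of $N_G(G_{01})$ already lies in $N_G(G_0)$, i.e.\ $N_G^0(G_{01})\leq N_G(G_0)$, contradicting the existence of $\rho_0$. This requires knowing that such subgroups satisfy $N_G(G_0)=G_0$ or $C_2\times G_0$, that $2d\mid q'\pm1$, and the invariance statement of Lemma~\ref{normc2psl}(d) combined with equation (\ref{enzero}) of Lemma~\ref{lemma3} to identify $N_G^0(G_{01})$ with a normalizer computed inside $G_0$ (or inside $N_G(G_0)$). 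Your proposal never engages with $N_G(G_0)$, so even after repairing the normalizer claim the divisibility-by-$8$ obstruction you aim for is not the right invariant and does not recover the proof.
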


\begin{proof}
We consider the possible choices for $G_0$ in the given C-group representation of $G$ of rank $4$. Our goal is to use Lemma~\ref{lemma2b} to limit the choices for $G_0$ to just $\Ree(3)'$ or $R(q')$. First recall from Lemma~\ref{rk3subs} that the only possible candidates for $G_0$ are either Ree subgroups $\Ree(q')$ with $q'\neq 3$ or subgroups of the form $\PSL_2(q')$, $C_2\times \PSL_2(q')$, or $\Ree(3)' \cong \PSL_2(8)$. To complete the proof we must eliminate the second and third types of candidates. This is accomplished by means of  Lemmas~\ref{lemma2b} and \ref{normc2psl}, proving in each case that $N_G(G_{01})\backslash N_G(G_{0})$ cannot contain an involution, or equivalently $N_G^0(G_{01})\leq N_G(G_{0})$. Bear in mind that $G_{01}\leq G_0$.

First observe that all subgroups of $G$ of the form $C_2\times \PSL_2(q')$ are self-normalized in $G$; and the normalizer of a subgroup of $G$ of the form $\PSL_2(q')$ is isomorphic to $C_2\times \PSL_2(q')$. In other words, $N_{G}(G_0)=G_{0}$ if $G_0$ is of type $C_2\times \PSL_2(q')$, and $N_{G}(G_0)=C_{2}\times G_{0}$ if $G_0$ is of type $\PSL_2(q')$. We show that $N_{G}^{0}(G_{01})\leq N_{G}(G_{0})$ for each of these three choices of $G_0$. 

Suppose that $G_0\cong C_2\times \PSL_2(q')$. We first claim that then $2d\mid q'\pm 1$. To see this, note that the intersection of $G_{01}$ with the $\PSL_{2}(q')$-factor of $G_0$ is a subgroup of index $1$ or $2$ in $G_{01}$. If the index is $1$, the statement is clear by Lemma~\ref{divd}, since then $G_{01}$ lies in the $\PSL_{2}(q')$-factor; and if the index is $2$ and the intersection is a cyclic group $C_d$, the statement follows by inspection of the possible orders of cyclic subgroups of $\PSL_{2}(q')$. Now if the index is $2$ and the intersection is a dihedral group $D_d$, then Lemma~\ref{normc2psl} shows that $d$ must be even, $2d\mid q+1$, and $d/2$ must be odd; moreover, $d\mid q'+1$ since $D_d$ lies in $\PSL_{2}(q')$, and hence $2d\mid q'+1$ since $q'+1$ is divisible by $4$. Thus $2d\mid q'\pm 1$, as claimed.

Now, since $G_0\cong C_2\times \PSL_2(q')$, the normalizer $N_{C_{G}(\rho_0)}(G_{01})$ coincides with the normalizer $N_{H}(G_{01})$ of $G_{01}$ taken in a suitable subgroup $H$ of $C_{G}(\rho_0)$ of type $C_{2}\times \PSL_{2}(q')$. In fact, from Lemma~\ref{normc2psl} we know that 
\[ N_{C_{G}(\rho_0)}(G_{01})\leq C_{2}\times D_{q\pm 1}\leq C_{G}(\rho_0)\cong C_2\times \PSL_2(q).\] 
But $2d\mid q'\pm 1$, so we must have $N_{C_{G}(\rho_0)}(G_{01})\leq C_{2}\times D_{q'\pm 1}$.  However, $C_{2}\times D_{q'\pm 1}$ lies in a subgroup $H$ of $C_{G}(\rho_0)$ isomorphic to $C_2\times \PSL_2(q')$.

To complete the argument (for any given type of group $G_0$) we show that $N_{G}^{0}(G_{01})$ must lie in $N_{G_0}(G_{01})$ and therefore also in $G_{0}$ and $N_{G}(G_{0})$. When $G_0$ is a group of type $C_2\times \PSL_2(q')$, the normalizer $N_{G_0}(G_{01})$ can be determined using Lemma~\ref{normc2psl} (with $q$ replaced by $q'$). In fact, by the invariance of the normalizers of $G_{01}$ we know that $N_{G_0}(G_{01})$ and $N_{H}(G_{01})$ are isomorphic and that both subgroups are generated by involutions. However, then by Lemma~\ref{lemma3},
\[ N_{G_0}(G_{01}) = N_{G_0}^{0}(G_{01})\leq N_{G}^{0}(G_{01})=N_{C_{G}(\rho_0)}^0(G_{01}) = N_{H}^0(G_{01}) = N_{H}(G_{01}), \]
so clearly $N_{G_0}(G_{01})=N_{H}(G_{01})$. Thus $N_{G}^{0}(G_{01})=N_{G_0}(G_{01})\leq G_{0}\leq N_{G}(G_{0})$.

Now let $G_0$ be of type $\PSL_2(q')$. Then $C:=N_{G}(G_0)$ is a group of type $C_{2}\times \PSL_{2}(q')$ containing $G_0$, so 
we can replace $G_0$ by $C$ and argue as before. In fact, using the same subgroup $H$, we see that the normalizers $N_{C}(G_{01})$ and $N_{H}(G_{01})$ are isomorphic subgroups generated by involutions. In particular,
\[ N_{C}(G_{01}) = N_{C}^{0}(G_{01})\leq N_{G}^{0}(G_{01})=N_{C_{G}(\rho_0)}^0(G_{01})=N_{H}^0(G_{01})=N_{H}(G_{01}), \]
and therefore $N_{C}(G_{01})=N_{H}(G_{01})$. Hence $N_{G}^{0}(G_{01})=N_{C}(G_{01})\leq C=N_{G}(G_{0})$.
\end{proof}

Let us now show that $G_0 \not\cong R(3)'$.
\begin{lemma}~\label{lemma3b}
If $\Ree(q)$ has a representation as a string C-group of rank 4 with $G_0 \cong R(3)'$, then $q=27$.
\end{lemma}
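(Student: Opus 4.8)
The plan is to exploit the fact that $G_0\cong\Ree(3)'\cong\PSL_2(8)$ is a very small group, so the dihedral subgroup $G_{01}=\langle\rho_2,\rho_3\rangle\cong D_{2d}$ living inside it is heavily constrained: by Lemma~\ref{psl28} the only non-abelian dihedral subgroups of $\Ree(3)$ (and hence of $\PSL_2(8)$) have order $6$, $14$ or $18$, so $d\in\{3,7,9\}$. First I would record that $G_{01}\le G_0\cong\PSL_2(8)$ forces $2d\in\{6,14,18\}$. On the other hand, by~\eqref{d2d} together with Lemmas~\ref{normc2psl} and~\ref{divd} applied inside $C_G(\rho_0)\cong C_2\times\PSL_2(q)$, we know $D\cong G_{01}\cong D_{2d}$ sits in the $\PSL_2(q)$-factor and $2d$ must divide $q-1$ or $q+1$, with $d\not\equiv 0\bmod 4$ and $d$ even only if $2d\mid q+1$. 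The values $d=3,7,9$ are all admissible on that side (they are odd), so this alone does not pin down $q$; the extra information must come from requiring that $G_0$ is one of the allowed rank-$3$ subgroups AND that $\Ree(3)'$ actually embeds in $\Ree(q)$.

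The key step is the embedding condition: $\PSL_2(8)\cong\Ree(3)'$ is a subgroup of $\Ree(q)$ only as a subgroup of a Ree subgroup $\Ree(3)$, and $\Ree(3)$ is a maximal subgroup of $\Ree(q)$ precisely when $q=3^p$ for a prime $p$; more generally $\Ree(3)\le\Ree(q)$ requires $3\mid q$ in the sense that $q=27^{?}$... — here I would instead argue via the subfield-subgroup structure: $\Ree(3)$ embeds in $\Ree(q)$ exactly when $3$ is an odd power of $3$ dividing $q$ appropriately, i.e.\ $q=3^{2e+1}$ with $3\mid (2e+1)$ being the relevant divisibility, equivalently $q=27^{(2e+1)/3}$... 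The cleanest route: Lemma~\ref{lemma3a} already tells us $G_0$ must be $\Ree(3)'$ or a simple $\Ree(q')$ with $q=q'^m$, $m$ odd; in the present case $G_0\cong\Ree(3)'$, and I would now invoke Lemma~\ref{lemma3} to get $N_G^0(G_{01})=N_{C_G(\rho_0)}^0(G_{01})$, and combine this with Lemma~\ref{lemma2b}, which demands $\rho_0\in N_G(G_{01})\setminus N_G(G_0)$. Using Lemma~\ref{psl28} inside the copy of $\Ree(3)$ containing $G_0$, one gets $N^0_{\Ree(3)}(G_{01})=G_{01}$, so if the relevant copy of $\Ree(3)$ were itself contained in $G$ as anything but the whole ambient structure near $\rho_0$, the normalizer would collapse; tracking where $N_G(G_{01})$ lives (a maximal subgroup of order divisible by $4$, hence of type $C_2\times\PSL_2(q)$, $N_G(A_1)$, or $\Ree(q')$) and matching $2d\in\{6,14,18\}$ against the parameters $q'\pm1$ of the smallest such Ree subgroup forces $q'=3$ itself to be the \emph{only} proper Ree subgroup involved, which happens exactly when the chain of subfield Ree subgroups bottoms out at $\Ree(3)$ with nothing between, i.e.\ $q=27$.

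More concretely I would argue: since $G_0\cong\Ree(3)'$ is generated by involutions and is not a $C$-group obstruction by itself, the pair $(G_0,\{\rho_1,\rho_2,\rho_3\})$ is a rank-$3$ string $C$-group; by Lemma~\ref{rk3subs} (applied now inside the ambient $\Ree(q)$ but recognizing $G_0$'s isomorphism type) this is consistent. The facet stabilizer $G_3=\langle\rho_0,\rho_1,\rho_2\rangle$ is, by Lemma~\ref{lemma3a}, also $\Ree(3)'$ or a simple $\Ree(q')$. If $G_3$ were a simple $\Ree(q')$ with $q'>3$, then $G=\langle G_0,G_3\rangle$ with $G_0\cong\PSL_2(8)\le\Ree(3)\le\Ree(q')$ (as $3\mid q'$ in the required sense) would generate a group inside $\Ree(q')<\Ree(q)$, contradicting $G=\Ree(q)$ unless $q'=q$, impossible since $\Ree(q)$ has no $\PSL_2(8)$ generating it together with a proper subgroup; so $G_3\cong\Ree(3)'$ as well. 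Now both $G_0,G_3\cong\PSL_2(8)$ sit inside copies of $\Ree(3)$ in $\Ree(q)$; since $\langle G_0,G_3\rangle=\Ree(q)$ and the subgroup of $\Ree(q)$ generated by two subfield subgroups $\Ree(3)$ is $\Ree(3^{\gcd})$-type only when it is all of $\Ree(q)$, and the smallest $q$ for which two distinct copies of $\Ree(3)$ generate the whole group is $q=27$ (one checks from the subgroup lattice / maximal subgroup list that for $q=27$ two suitable $\PSL_2(8)$'s generate $\Ree(27)$, while for larger $q$ one is trapped in a proper $\Ree(q')$), we conclude $q=27$.

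The main obstacle will be the last step: making rigorous that two copies of $\PSL_2(8)=\Ree(3)'$ inside $\Ree(q)$ cannot generate $\Ree(q)$ unless $q=27$. The honest way to handle this is not a generation argument in the abstract but a normalizer/maximal-subgroup argument in the style of Lemmas~\ref{lemma3}--\ref{lemma3a}: one shows $N_G(G_{01})$ is forced into a chain of Ree subfield subgroups terminating at $\Ree(3)$, and then the \emph{requirement that $G_0$ itself be one of those $\Ree(3)$-subgroups' derived subgroups}, together with $d\in\{3,7,9\}$ and the divisibility $2d\mid q\pm1$ versus $2d\mid 3\pm1$ (which fails for $d=3,9$ and only $d=... $), pins $q'$ in the bottom Ree subgroup to equal $3$ and the subfield index to equal $1$, i.e.\ $q=3^{3}=27$. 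I would present this as: by Lemma~\ref{lemma3} and Lemma~\ref{psl28}, $N_G^0(G_{01})=G_{01}$; but Lemma~\ref{lemma2b} needs $\rho_0\in N_G^0(G_{01})$ with $\rho_0\notin G_0\supseteq G_{01}$ — wait, $\rho_0\notin G_0$ holds automatically, so this is consistent only if $\rho_0\in G_{01}$, which is false, \emph{unless} the ambient group forces $G_{01}$ larger; chasing this contradiction down the subfield chain leaves exactly $q=27$ as the unique survivor. That chase, and verifying the $q=27$ case is genuinely not excluded here (it gets eliminated later, in Lemma~\ref{lemma3c}), is the delicate part.
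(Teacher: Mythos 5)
There are two genuine gaps here, and the first is a concrete error. You assert that ``the values $d=3,7,9$ are all admissible'' for the dihedral group $G_{01}\cong D_{2d}$ sitting inside $C_G(\rho_0)\cong C_2\times\PSL_2(q)$, checking only the parity conditions of Lemma~\ref{divd}. But the main content of that lemma is the divisibility $2d\mid q\pm 1$, and since $q$ is a power of $3$ we have $3\nmid q\pm1$, so neither $6$ nor $18$ divides $q\pm1$; hence $d=3$ and $d=9$ are excluded and $G_{01}\cong D_{14}$ is forced. This is not a side remark: the entire proof hinges on it, because $D_{14}\leq C_2\times\PSL_2(q)$ forces $7\mid q+1$ (a congruence computation rules out $7\mid q-1$), which happens exactly when $3\mid 2e+1$, which is what puts a $\Ree(27)$ subgroup into $G$ in the first place. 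By declaring $d=3,9$ admissible you lose the only road to the number $27$, and the subsequent appeal to ``the embedding condition for $\Ree(3)$'' cannot replace it, since $\Ree(3)'\cong\PSL_2(8)$ embeds in $\Ree(q)$ for every $q$ (inside a subfield subgroup $\Ree(3)$), not only for $q=27$.

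The second gap is the one you yourself flag as the ``main obstacle'': showing that $G$ \emph{equals} $\Ree(27)$ rather than merely containing a copy of it. Your proposed routes do not close it. The generation claim that two copies of $\PSL_2(8)$ generate $\Ree(q)$ only when $q=27$ is unsupported, and your normalizer chase rests on a misreading of Lemma~\ref{psl28}: that lemma computes $N^0$ \emph{inside} $\Ree(3)$, whereas the relevant normalizer $N_G(G_{01})$ in the ambient group is $(C_2^2\times D_{14}){:}C_3$, which strictly contains $G_{01}$ and does contain $\rho_0$ --- so there is no contradiction of the kind you describe, and Lemma~\ref{lemma2b} is satisfied, not violated. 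The paper's actual argument is a containment chain: $G_0$ lies in a unique $K=N_G(G_0)\cong\Ree(3)$, which lies in a unique $H\cong\Ree(27)$; one shows $N_G(G_{01})=N_H(G_{01})=(C_2^2\times D_{14}){:}C_3\leq H$ by a counting argument in the subgroup lattice, whence $\rho_0\in H$ and $G=\langle\rho_0,G_0\rangle\leq H$, forcing $q=27$. Some version of this uniqueness-of-overgroups argument is indispensable and is absent from your proposal.
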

\begin{proof}
Suppose $G:=\Ree(q)$ is represented as a string C-group of rank $4$ with generators $\rho_0,\ldots,\rho_3$. Then we know that $G_{01} \leq G_1 \leq C_G(\rho_0) \cong C_2\times \PSL_2(q)$. 

The abstract regular polyhedra with automorphism group $R(3)'=\PSL_2(8)$ are all known and are available, for instance,  in~\cite{LV2005}. There are seven examples, up to isomorphism, but not all can occur in the present context. In fact, the dihedral subgroup $G_{01}$ of $G_0$ must also lie $C_G(\rho_0)\cong C_2\times \PSL_2(q)$ and hence cannot be a subgroup $D_{18}$. 
It follows that the polyhedron associated with $G_0$ (that is, the vertex-figure of the polytope for $G$) must have Schl\"afli symbol $\{3,7\}$, $\{7,3\}$, $\{7,7\}$, $\{7,9\}$ or $\{9,7\}$. 
We can further rule out the possibility that $G_{01}\cong D_6$ or $D_{18}$ by Lemmas~\ref{divd} and \ref{normc2psl}, giving that $C_2\times \PSL_{2}(q)$ has no dihedral subgroup of order $6$ or $18$.
Hence $G_{01} \cong D_{14}$.

The fixed point set of every involution in $G$ is a block of the corresponding Steiner system $S(2,q+1,q^3+1)$, and vice versa, every block is the fixed point set of a unique involution. Hence, two involutions with two common fixed points must coincide, since their blocks of fixed points must coincide. Suppose $B_0$ denotes the block of fixed points of $\rho_0$. As $\rho_2$ and $\rho_3$ centralize $\rho_0$, they stabilize $B_0$ globally. However, $\rho_2$ cannot have a fixed point among the $q+1$ points in $B_0$, since otherwise two points of $B_0$ would have to be fixed by $\rho_2$ since $q+1$ is even. Thus $\rho_2$, and similarly $\rho_3$, does not fix any point in $B_0$. Moreover, in order for $G_{01}\cong D_{14}$ to lie in a subgroup of $G$ of type $C_2\times \PSL_2(q)$, we must have $7\mid q+1$ or $7\mid q-1$. Using $q=3^{2e+1}$ and working modulo $7$ the latter possibility is easily seen to be impossible. On the other hand, the former possibility occurs precisely when $e\equiv 1 \bmod 3$, and then $3\mid 2e+1$. Hence $G$ must have subgroups isomorphic to $\Ree(27)=\Ree(3^3)$.

We claim that $G$ itself is isomorphic to $\Ree(27)$, that is, $q=27$. Now the subgroup $G_0 \cong R(3)'$ lies in a unique subgroup $K\cong \Ree(3)$ of $G$, namely its normalizer $N_G(G_0)$. This subgroup $K$, in turn, lies in a unique subgroup $H\cong \Ree(27)$ of $G$. All Ree subgroups of $G$ are self-normalized in $G$, so in particular $K$ and $H$ are self-normalized. Relative to the Ree subgroup $H$, the normalizer $N_{H}(C_7)$ in $H$ of the cyclic subgroup $C_7$ of $G_{01}$ is a maximal subgroup of type $N_{H}(A_1) = (C_2^2\times D_{14}):C_3$ in $H$, which also contains $G_{01}$ (see Section~\ref{reebasics} or \cite[p. 123]{Con85}). Note here that this subgroup $C_7$ is a 7-Sylow subgroup of both $K$ and $H$, and is normalized by $G_{01}$. Thus, $N_H(C_7) = (C_2^2\times D_{14}):C_3$. We claim that  $N_H(G_{01})=N_H(C_7)$. Clearly, $N_H(G_{01})\leq N_H(C_7)$. For the opposite inclusion observe that $(C_2^2\times D_{14}):C_3$ has four subgroups isomorphic to $D_{14}$, including $G_{01}$. The subgroup $G_{01}$ is normalized by the $C_{3}$-factor, and the three others are permuted under conjugation by $C_3$. Hence, among these four subgroups only $G_{01}$ is normal and can be thought of as the subgroup $D_{14}$ occurring in the factorization of the semi-direct product. It follows that the subgroups $C_{2}^2$ and $C_3$ normalize $G_{01}$. Thus 
\[ N_H(G_{01})=N_H(C_7) = (C_2^2\times D_{14}):C_3 . \]

Figure~\ref{sublat} shows the sublattice of the subgroup lattice of $G$ that is relevant to the current situation. Each box contains two pieces of information:\ a group that describes the abstract structure of the groups in the conjugacy class of subgroups of $G$ depicted by the box, and a number in the lower left corner that gives the number of subgroups in the conjugacy class. This number is the order of $G$ divided by the order of the normalizer in $G$ of a representative subgroup of the conjugacy class. Two boxes are joined by an edge provided that the subgroups represented by the lower box are subgroups of some subgroups represented by the upper box. There are also two numbers on each edge. The  number at the top gives the number of subgroups in the conjugacy class for the lower box that are contained in a {\sl given\/} subgroup in the conjugacy class for the upper box. The number at the bottom similarly is the number of subgroups in the conjugacy class for the upper box that contain a {\sl given\/} subgroup in the conjugacy class for the lower box.
If we know the lengths of the conjugacy classes for the upper box and lower box, then knowing one of these two numbers on the connecting edge gives us the other. For instance, in Figure~\ref{sublat}, if we know that there are 36 (conjugate) subgroups $D_{14}$ in a given subgroup $R(3)'$, then there are 
\[ {\frac{|G|}{|R(3)|}\,.\,36} \,\slash\, {\frac{|G|}{|2^2\,.\,3\,.\,14|}} = 4\] 
(conjugate) subgroups $R(3)'$ containing a given subgroup $D_{14}$.

Returning to our line of argument, Figure~\ref{sublat} tells us that $G_0\cong R(3)'$ is in a unique subgroup isomorphic to $R(27)$, namely $H$ (because of the lower 1's on the edges joining the boxes). It also shows that $G_{01}$ is contained in a unique subgroup $(C^2_2\times D_{14}):C_3$, which, in turn, is contained in a unique $R(27)$, namely $H$. As we saw above, this subgroup $(C^2_2\times D_{14}):C_3$ is necessarily the normalizer $N_{H}(G_{01})$ of $G_{01}$ in $H$. Moreover, $\rho_0$ has to lie in this unique subgroup $(C^2_2\times D_{14}):C_3$, which itself is a subgroup of $H$, and therefore $\langle \rho_0,G_0\rangle \leq H$. This holds because $N_G(G_{01}) = N_H(G_{01})$. 
That these normalizers coincide can be seen as follows. Clearly, $N_H(G_{01}) \leq N_G(G_{01})$. Now for the opposite inclusion observe that for $g\in N_G(G_{01})$ we have $G_{01} = gG_{01}g^{-1} \leq g N_H(G_{01})g^{-1}$ and (trivially) $G_{01}\leq N_H(G_{01})$.
But then Figure~\ref{sublat} shows that a subgroup $D_{14}$ of $H$ must lie in a unique conjugate of $(C^2_2\times D_{14}):C_3=N_{H}(G_{01})$, so necessarily $g N_H(G_{01})g^{-1} = N_H(G_{01})$. Similarly, since $N_H(G_{01})\leq H$ and hence $N_H(G_{01})=gN_H(G_{01})g^{-1}\leq gHg^{-1}$, Figure~\ref{sublat} (at box $R(27)$) gives $gHg^{-1}=H$, so $g\in H$ since $H$ is self-normalized.
Thus $G = \langle \rho_0,G_0\rangle = H \cong R(27)$. 
\end{proof}

\begin{figure}[htbp]
\begin{picture}(20,220)
\put(0,10){
\put(-18,200){\framebox{$\Ree(q)$}}
\put(0,194){\line(0,-1){32}}
\put(-25,150){\framebox{$_{\frac{|G|}{|\Ree(27)|}}\Ree(27)$}}
\put(0,138){\line(0,-1){26}}
\put(-25,100){\framebox{$_{\frac{|G|}{|\Ree(3)|}}\Ree(3)$}}
\put(0,88){\line(0,-1){26}}
\put(-35,50){\framebox{$_{\frac{|G|}{|\Ree(3)|}}\Ree(3)' \cong \PSL_2(8)$}}
\put(0,38){\line(0,-1){26}}
\put(-25,0){\framebox{$_{\frac{|G|}{|2^2.3.14|}}D_{14}$}}
\put(-55,129){\tiny $|\Ree(27)|/|\Ree(3)|$}
\put(2,115){\tiny 1}
\put(2,79){\tiny 1}
\put(2,65){\tiny 1}
\put(2,79){\tiny 1}
\put(2,65){\tiny 1}
\put(2,29){\tiny 36}
\put(2,15){\tiny 4}
\put(110,75){\framebox{$_{\frac{|G|}{|2^2.3.14|}}(C_2^2\times D_{14}):C_3$}}
\put(31,11){\line(2,1){100}}
\put(130,50){\tiny 1}
\put(35,5){\tiny 1}
\put(110,90){\tiny 1}
\put(59,128){\tiny $|\Ree(27)|/168$}
\put(37,138){\line(2,-1){100}}}
\end{picture}
\caption{A sublattice of the subgroup lattice of $\Ree(q)$}
\label{sublat}
\end{figure}
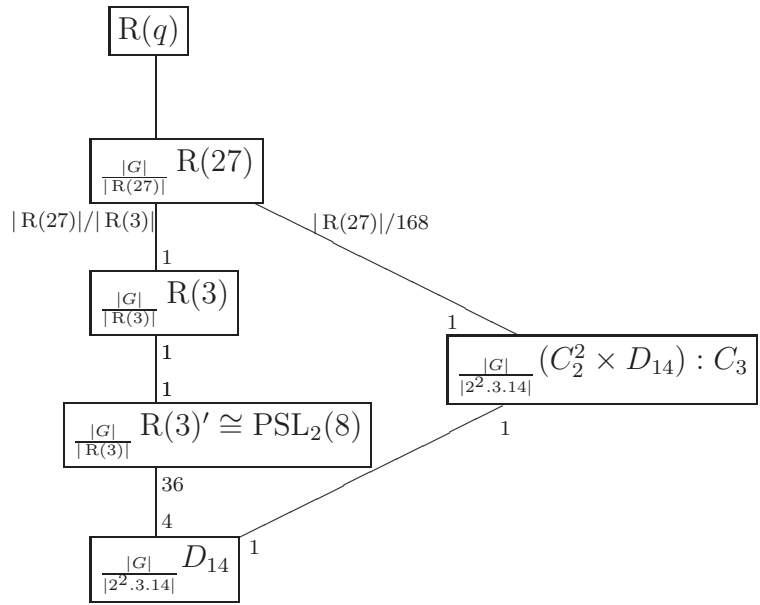

\begin{lemma}~\label{lemma3c}
The group $\Ree(27)$ cannot be represented as a string C-group of rank 4.
\end{lemma}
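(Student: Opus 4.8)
Suppose, for a contradiction, that $G:=\Ree(27)$ admits a representation as a string C-group of rank $4$, say $G=\langle\rho_0,\rho_1,\rho_2,\rho_3\rangle$. Since $27=3^3$, the only proper Ree subgroup of $G$ is $\Ree(3)$, which is not simple; hence by Lemma~\ref{lemma3a} we must have $G_0\cong G_3\cong\PSL_2(8)=\Ree(3)'$. The argument in the proof of Lemma~\ref{lemma3b}, applied to $G_0$ and, in mirror image under $\rho_i\leftrightarrow\rho_{3-i}$, to $G_3$ — together with the fact that $C_2\times\PSL_2(27)$ has no dihedral subgroup of order $6$ or $18$ — gives $G_{01}=\langle\rho_2,\rho_3\rangle\cong D_{14}$ and $G_{23}=\langle\rho_0,\rho_1\rangle\cong D_{14}$. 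Thus the vertex-figure $G_0$ and the facet $G_3$ are regular polyhedra of $\PSL_2(8)$ of Schl\"afli types $\{p_2,7\}$ and $\{7,p_2\}$, so $p_2\in\{3,7,9\}$ and $G$ has type $\{7,p_2,7\}$; by the intersection property $G_{03}=G_0\cap G_3=\langle\rho_1,\rho_2\rangle\cong D_{2p_2}$, while $G_1\cong G_2\cong C_2\times D_{14}\cong D_{28}$.

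Next I would assemble the ambient subgroup structure. Inspection of the maximal subgroups of $\Ree(27)$ shows that a copy of $\PSL_2(8)$ lies in none of them except a Ree subgroup $\Ree(3)$: the remaining maximal subgroups are solvable, or of type $C_2\times\PSL_2(27)$, and $\PSL_2(8)\not\le\PSL_2(27)$. As $\PSL_2(8)=\Ree(3)'$ is maximal in $\Ree(3)$ and Ree subgroups are self-normalised, $K_i:=N_G(G_i)\cong\Ree(3)$ is the \emph{unique} $\Ree(3)$ containing $G_i$ ($i=0,3$); compare Figure~\ref{sublat}. Since $\langle G_0,G_3\rangle$ contains all four generators, $\langle G_0,G_3\rangle=G$, and therefore $K_0\ne K_3$. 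As in the proof of Lemma~\ref{lemma3b}, $N_G(G_{01})$ and $N_G(G_{23})$ are maximal subgroups of type $N_G(A_1)\cong(C_2^{\,2}\times D_{14}){:}C_3$, in which $G_{01},G_{23}$ are the normal $D_{14}$-factors; the complementary Klein factor $C_2^{\,2}$ is then $C_G(G_{01})$, respectively $C_G(G_{23})$ (the $C_3$ induces an order-$3$ automorphism of the cyclic part of $D_{14}$, so it does not centralise $D_{14}$), and $\rho_0\in C_G(G_{01})$, $\rho_3\in C_G(G_{23})$ since $\rho_0$ commutes with $\rho_2,\rho_3$ and $\rho_3$ commutes with $\rho_0,\rho_1$.

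The heart of the proof is to eliminate each of the types $\{7,3,7\}$, $\{7,7,7\}$, $\{7,9,7\}$ by analysing the two distinct Ree subgroups $K_0,K_3$, which share the fixed dihedral subgroup $G_{03}=G_0\cap G_3\le K_0\cap K_3$. Because $\Ree(3)/\Ree(3)'\cong C_3$ has odd order, every involution of $\Ree(3)$ lies in its $\PSL_2(8)$; hence $\rho_0,\rho_3\notin K_0\cap K_3$, whereas $\rho_1,\rho_2\in G_{03}\le K_0\cap K_3$. Working inside $K_0\cong K_3\cong\Ree(3)=P\Gamma L_2(8)$ with its subgroup lattice, one determines $N_{K_i}(G_{03})$: for $p_2=7$ this is a Frobenius group $C_7{:}C_6$ of order $42$, itself maximal in $\Ree(3)$, which forces $K_0\cap K_3\cong C_7{:}C_6$ and, as $42\nmid 56$, forces $N_G(G_{03})$ to be of type $N_G(A_1)$. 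Tracking the positions of $\rho_0,\rho_3$ and of $G_{01},G_{23},G_{03}$ relative to $K_0$, $K_3$ and this common $N_G(A_1)$-overgroup must then be shown to be incompatible with the string relations and the intersection property. The cases $p_2=3$ (where $G_{03}\cong S_3$, so $\rho_1\rho_2$ is a unipotent element of $\Ree(27)$ inverted by $\rho_1$ and $\rho_2$) and $p_2=9$ (where $G_{03}\cong D_{18}$ and $N_{K_i}(G_{03})$ has order $54$) run along the same lines.

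The step I expect to be the main obstacle is exactly this last one: making the constraints on $K_0\cap K_3$ and on the conjugacy positions of the various dihedral subgroups sharp enough to force the contradiction uniformly in the three cases. Since the analysis has by now reduced the search to generating $4$-tuples of involutions of $\Ree(27)$ with a very rigid local structure ($G_0\cong G_3\cong\PSL_2(8)$ and type $\{7,p_2,7\}$ with $p_2\in\{3,7,9\}$), a direct machine computation inside $\Ree(27)$ (of order about $10^{10}$) is a dependable alternative route that settles the nonexistence at once.
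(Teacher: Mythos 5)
Your setup is sound and matches the paper's through the identification $G_0\cong G_3\cong\PSL_2(8)$, $G_{01}\cong G_{23}\cong D_{14}$, and $N_G(G_{01})\cong(C_2^{\,2}\times D_{14}){:}C_3$ with $\rho_0$ lying in the central Klein factor. But the proof has a genuine gap: the contradiction is never derived. You reduce to the types $\{7,p_2,7\}$ with $p_2\in\{3,7,9\}$ and then describe a programme (analyse $K_0\cap K_3$ and the relative positions of $G_{01}$, $G_{23}$, $G_{03}$ in each case) whose decisive step you yourself flag as ``the main obstacle,'' before falling back on an unperformed machine computation. Some of the intermediate claims in that programme are also asserted rather than proved (e.g.\ that $N_{K_i}(G_{03})\cong C_7{:}C_6$ being maximal in $\Ree(3)$ ``forces'' $K_0\cap K_3\cong C_7{:}C_6$ needs an argument that $N_{K_0}(G_{03})$ actually lies in $K_3$). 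As it stands the nonexistence is not established.

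The paper closes the argument by a route that sidesteps the case distinction on $p_2$ entirely, and you may want to compare it with your plan. Since there is a single conjugacy class of subgroups $\Ree(3)'$ in $\Ree(27)$, a single class of $D_{14}$'s inside $\Ree(3)'$, and the three candidates for $\rho_0$ in the centralising $C_2^{\,2}$ are fused by the $C_3$, the unordered set $\{\rho_0,\rho_2,\rho_3\}$ is determined up to conjugacy in $G$. By the self-duality of the configuration, $(G_{23},G_3)$ is conjugate to $(G_{01},G_0)$, so some $g\in G$ sends $(\rho_0,\{\rho_2,\rho_3\})$ to $(\rho_3,\{\rho_0,\rho_1\})$; after adjusting by an element of $C_G(\rho_0)$ interchanging $\rho_2$ and $\rho_3$, one may assume $g$ swaps $\rho_0$ and $\rho_3$. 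Then $g$ normalises the Klein group $\langle\rho_0,\rho_3\rangle$, whose normaliser is again of type $(C_2^{\,2}\times D_{14}){:}C_3$; in that group every element conjugating $\rho_0$ to $\rho_3$ sends $\rho_3$ to $\rho_0\rho_3$, so no such $g$ exists. This is the missing idea: the contradiction lives in the normaliser of $\langle\rho_0,\rho_3\rangle$, not in the intersection $K_0\cap K_3$, and it makes no reference to $p_2$ or to $\rho_1$ beyond the duality. If you prefer to keep your route, you must either carry out the three case analyses in full or actually perform and report the computation; neither is done in the proposal.
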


\begin{proof}
Let $G\cong \Ree(27)$.
By the previous lemmas we may assume that $G_0\cong G_3\cong \PSL_2(8)$. In all other cases we know that $G$ cannot be represented as a rank 4 string C-group.
Moreover, from the proof of the previous lemma we already know that $G_{01} \cong D_{14}$ and 
$N_{G}(G_{01}) \cong (C_2^2\times D_{14}) : C_3$. As there is a unique conjugacy class of subgroups $\Ree(3)'$ in $\Ree(27)$, and there is also a unique conjugacy class of subgroups $D_{14}$ in $\Ree(3)'$, the choice of $\rho_2,\rho_3$ is therefore unique up to conjugacy in $\Ree(27)$. Once $\rho_2,\rho_3$ have been chosen, there are three candidates for $\rho_0$, namely the elements of the subgroup $C_2^2$ that centralizes $D_{14}$, and these are equivalent under conjugacy by $C_3$. Hence there is a unique choice for $\{\rho_0, \rho_2, \rho_3\}$ up to conjugacy. By duality we also know that $G_3 \cong \Ree(3)'$ and $G_{23}\cong D_{14}$, and that the pair $(G_{23},G_3)$ is related to $(G_{01},G_{0})$ by conjugacy in $R(27)$. Hence there must exist an element $g\in \Ree(27)$ such that 
\begin{itemize} 
\item $\rho_0^g = \rho_3$, $\rho_2^g = \rho_1$, $\rho_3^g=\rho_0$,\ or
\item $\rho_0^g = \rho_3$, $\rho_2^g = \rho_0$, $\rho_3^g=\rho_1$.
\end{itemize}
The second case can be reduced to the first, as the centraliser of $\rho_0$ contains an element that swaps $\rho_2$ and $\rho_3$ (any two involutions in $D_{14}$ are conjugate).
Hence, we may assume without loss of generality that $g$ swaps $\rho_0$ and $\rho_3$. In particular, $\langle \rho_0, \rho_3 \rangle$ is an elementary abelian group of order 4 normalized by $g$. All such subgroups are known to be conjugate and have as normalizer a group $(C_2^2\times D_{14}):C_3$. In this group, there is no element that will swap $\rho_0$ and $\rho_3$ under conjugation. All elements that will conjugate $\rho_0$ to $\rho_3$ will necessarily conjugate $\rho_3$ to $\rho_0\rho_3$. Hence we have a contradiction.
\end{proof}

We therefore know that if a string C-group representation of rank 4 exists for $\Ree(q)$, both $G_0$ and $G_3$ must be subgroups of Ree type. Thus from now on we can assume $G_0\cong \Ree(q')$ with $q'>3$.

In a Ree group, the dihedral subgroups $D_{2n}$ are such that $n$ must divide one of 
\[ 9,\ q-1,\ q+1,\ \aq := q+1-3^{e+1},\ \bq := q+1+3^{e+1}. \]
Note that 
\[ \aq\bq= q^{2}-q+1, \]
so in particular if $H$ is a Ree subgroup $\Ree(q')$ of $G$ then similarly $\aqp\bqp = q'^2-q'+1$.

\begin{lemma}\label{newLemma1}
Let $G \cong \Ree(q)$ with $q=3^{2e+1}$ and $\langle \rho_0, \rho_1, \rho_2, \rho_3\rangle$ be a string C-group representation of rank 4 of $G$.
Then
\begin{enumerate}
\item $G_{01}$ is a dihedral subgroup $D_{2d}$ with $d$ a divisor of $q+1$ and of either $\aqp$ or $\bqp$ for some $q'$ such that $q = q'^m$ with $m$ odd (where $q'$ is determined by $G_{0}=\Ree(q')$);
\item $m = 3$, and $G_0$ and $G_3$ are conjugate Ree subgroups $\Ree(q')$ with $q = q'^3$;
\item $G_{03}$ is a dihedral subgroup $D_{2t}$ with $t$ a divisor of $\aqp$ or $\bqp$. 
\end{enumerate}
\end{lemma}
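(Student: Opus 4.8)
The plan is to pin down $G_{01}=\langle\rho_2,\rho_3\rangle\cong D_{2d}$ (with $d\ge 3$, exactly as in the proof of Lemma~\ref{lemma3}) by playing off the two subgroups in which it manifestly sits. Since $\rho_0$ centralises $\rho_2$ and $\rho_3$, one has $G_{01}\le G_1=\langle\rho_0\rangle\times G_{01}\le C_G(\rho_0)\cong C_2\times\PSL_2(q)$, so Lemmas~\ref{divd} and~\ref{normc2psl}, together with the analysis in the proof of Lemma~\ref{lemma3}, give $2d\mid q-1$ or $2d\mid q+1$. On the other hand, by Lemmas~\ref{lemma3a}--\ref{lemma3c} we may take $G_0\cong\Ree(q')$ with $q=q'^m$, $m$ odd and $q'>3$; then $G_{01}\le G_0$, together with the dihedral-subgroup classification for Ree groups recalled just above, forces $d$ to divide one of $9,q'-1,q'+1,\aqp,\bqp$. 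Since $2d\mid q\pm1$ and $3\mid q$, the number $d$ is prime to $3$, so it cannot divide $9$. Hence statement (1) reduces to excluding the cases $d\mid q'-1$ and $d\mid q'+1$: once these are gone $d$ divides $\aqp$ or $\bqp$, and because every prime dividing $\aqp$ or $\bqp$ has multiplicative order $6$ with respect to $q'$ while $m$ is odd, the condition $2d\mid q\pm1$ forces $3\mid m$ and then $d\mid q'^{3}+1\mid q'^{m}+1=q+1$, which is exactly (1).

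To exclude $d\mid q'\pm1$, I would argue as follows. In either case $q'\equiv 3\bmod 8$ forces $2d\mid q'\pm1$, so $G_{01}$ lies inside $H_0:=C_{G_0}(\tau)\cong C_2\times\PSL_2(q')$ for a suitable involution $\tau$ of $G_0$. By Lemma~\ref{normc2psl} the normaliser $N_{H_0}(G_{01})$ is generated by involutions and is isomorphic to $C_2\times D_{2d}$, unless $2d\mid q'+1$ with $(q'+1)/2d$ even, in which case it is $C_2\times D_{4d}$. Because $m$ is odd, $q'+1$ divides $q+1$ with an odd quotient, so the very same alternative describes $N_{C_G(\rho_0)}(G_{01})$; in particular these two normalisers have the same order. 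Now $\rho_0$ is an involution in $N_G(G_{01})$ but $\rho_0\notin G_0$ (as $G$ is simple), so by Lemma~\ref{lemma3} we have $\rho_0\in N^0_G(G_{01})=N^0_{C_G(\rho_0)}(G_{01})=N_{C_G(\rho_0)}(G_{01})$; since $N_{H_0}(G_{01})\le N^0_G(G_{01})$ and these have equal order, they coincide, whence $N^0_G(G_{01})\le H_0\le G_0$, contradicting $\rho_0\notin G_0$. This settles (1).

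For (2) and (3) the extra input is that (1) and Lemmas~\ref{lemma3a}--\ref{lemma3c} apply verbatim to the dual string C-group, so $G_3\cong\Ree(q'')$ with $q=q''^{m'}$, $m'$ odd, $3\mid m'$, and $G_{23}=\langle\rho_0,\rho_1\rangle\cong D_{2d_1}$ with $d_1$ dividing $q+1$ and dividing $\alpha_{q''}$ or $\beta_{q''}$. To prove (2) I would assume $m=3k$ with $k>1$; then $G_0=\Ree(q')$ lies in a proper Ree subgroup $\widetilde G=\Ree(q^{1/p})$ of $G$, $p$ being a prime divisor of $m$. The divisibilities from (1) say $d$ divides $q+1$ but none of $q-1,\aq,\bq$, so the maximal subgroup of $G$ containing $N_G(G_{01})$ is of type $C_2\times\PSL_2(q)$ or $N_G(A_1)$; using the invariance of the normaliser structure (Lemma~\ref{normc2psl}) one checks that $N_G(G_{01})$ already lies in $\widetilde G$, whence $\rho_0\in\widetilde G$ and $G=\langle\rho_0,G_0\rangle\le\widetilde G$, a contradiction. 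Running the dual argument gives $m'=3$ as well, and since the $\Ree(q^{1/3})$-subgroups of $G$ form a single conjugacy class, $G_0$ and $G_3$ are conjugate. Finally, for (3): $G_{03}=\langle\rho_1,\rho_2\rangle=G_0\cap G_3$ is a dihedral group $D_{2t}$ with $t\ge 3$, and $G_{03}\le G_0\cong\Ree(q')$ forces $t$ to divide one of $9,q'-1,q'+1,\aqp,\bqp$; I would exclude $t\mid 9$ and $t\mid q'\pm1$ just as in (1), by placing $G_{03}$ inside a proper subgroup $L$ of $G_0$ (of type $\Ree(3)$, respectively $C_2\times\PSL_2(q')$) and deducing from $G_0=\langle\rho_3,G_{03}\rangle$ that $\rho_3$ must normalise, hence lie in, $L$ — impossible, since then $G_0\le L$.

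\textbf{Main obstacle.} The hardest step is the exclusion of $d\mid q'\pm1$ in (1): it hinges on matching the precise dihedral type ($D_{2d}$ versus $D_{4d}$) of $N_{H_0}(G_{01})$ and $N_{C_G(\rho_0)}(G_{01})$, which is where the parity of $(q+1)/(q'+1)$ and the hypothesis that $m$ is odd are used. A secondary difficulty is, in (2), verifying that $N_G(G_{01})$ does not outgrow $N_{\widetilde G}(G_{01})$, since this is what upgrades ``$3\mid m$'' to ``$m=3$''.
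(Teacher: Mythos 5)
Your treatment of parts (1) and (2) follows essentially the paper's route: you invoke Lemma~\ref{lemma2b}/Lemma~\ref{lemma3} to force an involution of $N_G(G_{01})$ outside $G_0$, compare the normaliser of $G_{01}$ inside $C_G(\rho_0)\cong C_2\times\PSL_2(q)$ with its normaliser inside a copy of $C_2\times\PSL_2(q')$ sitting in $G_0$, and then use the order-$6$ property of primes dividing $\aqp\bqp$ to get $3\mid m$ and $d\mid q+1$ (the paper reaches $2d\mid q+1$ earlier, via the maximal subgroups of type $N_G(A_1)$, and gets $3\mid m$ by a gcd computation, but these are equivalent). Two points there deserve more care: the containment $G_{01}\le C_{G_0}(\tau)$ for some involution $\tau$ of $G_0$ is asserted, not proved (it is true, but needs a run through the maximal subgroups of $\Ree(q')$, noting e.g.\ that a $D_{2d}$ with $d\ge 3$ prime to $3$ cannot lie in a point stabiliser $A:C_{q'-1}$); and in (2) you should take $\widetilde G=\Ree(q'^3)$ (or $\Ree(q^{1/p})$ with $p$ a prime divisor of $m/3$), since for $\widetilde G=\Ree(q^{1/3})=\Ree(q'^{m/3})$ the divisibility $d\mid q'^{m/3}+1$ can fail and the normaliser comparison does not go through.

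The genuine gap is in part (3). Your mechanism is to place $G_{03}=\langle\rho_1,\rho_2\rangle$ inside a proper subgroup $L$ of $G_0$ and to deduce from $G_0=\langle\rho_3,G_{03}\rangle$ that ``$\rho_3$ must normalise, hence lie in, $L$''. There is no such deduction: $\rho_3$ does not commute with $\rho_2$, so $\rho_3$ does not even normalise $G_{03}$ (if it did, $\langle\rho_1,\rho_2,\rho_3\rangle$ would have order $2|G_{03}|$, not $|\Ree(q')|$), let alone an overgroup $L$ of $G_{03}$. The argument that worked for $G_{01}$ relied crucially on $\rho_0$ centralising $G_{01}$ and hence lying in $N_G(G_{01})$; for $G_{03}$ no generator outside $G_{03}$ normalises it, so that template cannot be copied. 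The correct lever, which is what the paper uses, is the intersection property together with part (2): $G_0\cap G_3=G_{03}$, and $G_0,G_3$ are two \emph{distinct conjugate} subgroups $\Ree(q')$ both containing $G_{03}$. If $t$ divided $9$ or $q'\pm1$, one exhibits a subgroup $H$ of $G_0$ with $G_{03}<H$ and $N_G(H)\ge N_G(G_{03})$, which forces every conjugate of $G_0$ containing $G_{03}$ to contain $H$; then $G_0\cap G_3\ge H>G_{03}$, contradicting the intersection property. You need to supply an argument of this shape (or some other argument that does not require $\rho_3$ to normalise anything) to close part (3).
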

\begin{proof}
(1) By Lemmas~\ref{lemma3a},~\ref{lemma3b} and~\ref{lemma3c}, we may assume that $G_0$ is a simple Ree subgroup of $G$. Let $G_0$ be a Ree subgroup $\Ree(q')$, with $q'\neq 3$ such that $q'^m = q$ with $m$ a positive odd integer and let $G_{01}\cong D_{2d}$. 
As $G_{01} \leq C_G(\rho_0)$ we have that $2d\mid q\pm 1$ by Lemma~\ref{divd}. 
In order to have involutions in $N_{G}(G_{01})\backslash N_{G}(G_{0})$, the only possibility is that $N_{G}(G_{01})$ (of order divisible by $4$) lies in a maximal subgroup of type $N_{G}(A_1)$ but not in a maximal subgroup $N_{G_0}(C_{\frac{q'+1}{4}})$ of $G_0$; for otherwise, the same techniques as in Lemma~\ref{lemma3a} show that there is no involution in $N_{G}(G_{01})\backslash N_{G}(G_{0})$. Hence $2d$ divides $q+1$. Observe that $N_{G}(A_1)\cong (C_2^2\times D_{\frac{q+1}{2}}):C_3$ has exactly four subgroups $D_{\frac{q+1}{2}}$ because of the subgroup $C_2^2$. These four subgroups are not all normalised by the $C_3$ because of the semi-direct product. Hence the $C_3$ must conjugate three of them and normalise the fourth one. Similarly, in $R(q')$ there are four subgroups $D_{q'+1}$ in each $N_{R(q')}(A_1')$ and it is obvious that $N_{R(q')}(D_{2d}) = N_{R(q)}(D_{2d})$ for every divisor $d$ of $q'+1$. Hence, in order to find some involutions in $N_{G}(G_{01})\backslash N_{G}(G_{0})$, we need to have that $2d$ does not divide $q'+1$. Moreover,  since $q'-1$ divides $q-1$, we have also that $(q'-1,q+1) = 2$. That forces $d$ not to be a divisor of $q'-1$ as $d>2$. Hence, looking at the list of maximal subgroups of $R(q')$ we can conclude that $d$ is a divisor of either $\alpha_{q'}$ or $\beta_{q'}$ in order for $D_{2d}$ to be a dihedral subgroup of $R(q')$. 

(2)  Observe that $q'^3+1 = (q'+1)\alpha_{q'}\beta_{q'}$ divides $q^3+1$. 
Let us first show that $2e+1$ must be divisible by $3$ in order for $d$ to satisfy (1).
Suppose $(3,2e+1) = 1$. Then $q' = 3^{2f+1}$ with $2e+1 = m(2f+1)$ and $(3,m) = 1$.
Let $p$ be an odd prime dividing $(\alpha_{q'}\beta_{q'},q+1)$ but not dividing $q'+1$.
Then $p$ divides $(q'^3+1,q+1)$ and hence $p$ divides 
\[ (q'^6-1,q'^{2m}-1) = q'^{2(3,m)}-1 = q'^2-1 = (q'+1)(q'-1)\] 
and hence also $q'-1$. As $p$ divides $q+1$, and $q'-1$ divides $q-1$, and since $(q-1,q+1) = 2$, we have that $p\mid 2$, a contradiction. Hence $m$ must be divisible by $3$ and so does $2e+1$.
Suppose $m \neq 3$. Then $m = 3m'$ and given a Ree subgroup $\Ree(q')$ of $\Ree(q)$ with $q'^m = q$, there exists a Ree subgroup $\Ree(q'^3)$ such that $\Ree(q') < \Ree(q'^3) < \Ree(q)$.
Using similar arguments as in the proof of Lemma~\ref{lemma3b}, it is easy to show that, since $\alpha_{q'}\beta_{q'}$ divides $q'^3+1$, we must have $\langle \rho_0, \rho_1, \rho_2, \rho_3 \rangle = \Ree(q'^3)$ and therefore $m=3$. Indeed, as we stated in (1), $N_{R(q'^3)}(D_{2d}) = N_{R(q)}(D_{2d})$ for every divisor $d$ of $q'^3+1$. Hence $\rho_0 \in \Ree(q'^3)$. This implies that
 $m=3$ and $G_0 \cong \Ree(q')$ with $q'^3 = q$. Dually, $G_3 \cong \Ree(q')$. As all subgroups $\Ree(q')$ are conjugate in $\Ree(q)$, we have that $G_0$ and $G_3$ are conjugate.

(3) is due to the fact that $G_0 \cap G_3 = G_{03}$ and that, by (2), $G_0$ and $G_3$ are conjugate in $G$. Hence, $N_G(G_{03}) \setminus G_0$ has to be nonempty and $G_{03}$ must not be contained in a subgroup $H$ of $G_0$ such that $N_G(H) \geq N_G(G_{03})$, for if such a subgroup $H$ exists, then $G_0 \cap G_3 \geq H$. If $t$ divides $9$ or one of $q'\pm 1$, this does not happen. Hence $t$ divides one of $\alpha_{q'}$ or $\beta_{q'}$.
\end{proof}

\begin{lemma}
The small Ree groups have no string C-group representation of rank $4$.
\end{lemma}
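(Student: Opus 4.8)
The plan is to combine Lemma~\ref{newLemma1} with a $2$‑rank obstruction. By all the preceding reductions, a string C‑group representation $\langle\rho_0,\rho_1,\rho_2,\rho_3\rangle$ of rank $4$ of a small Ree group forces $q=q'^3$ with $q'=3^{2e'+1}>3$, makes $G_0$ and $G_3$ conjugate Ree subgroups isomorphic to $\Ree(q')$, and makes $G_{03}=G_0\cap G_3=\langle\rho_1,\rho_2\rangle$ a dihedral group $D_{2t}$ with $t\ge 3$ dividing $\aqp$ or $\bqp$. So I would fix this situation, say $t\mid\aqp$ (the case $t\mid\bqp$ being symmetric), and show that it already puts a subgroup into $G=\Ree(q'^3)$ that cannot occur there.

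First I would pin down the ``rotation'' subgroup $C:=[G_{03},G_{03}]\cong C_t$ simultaneously inside the three ambient groups $G_0$, $G_3$ and $G$. The key number‑theoretic point is that $\aqp\bqp=\Phi_6(q')=q'^2-q'+1$ is coprime to $6$, so every prime $p\mid t$ is a primitive prime divisor of $q'^6-1$ (otherwise $\mathrm{ord}_p(q')=d<6$ would force $p\mid 6/d\in\{2,3,6\}$). Hence $p$ divides $\aqp$ only — not $\bqp$, not $q'\pm1$, not $9$ — so the self‑centralising torus $C_{\aqp}$ of $\Ree(q')$ is the \emph{unique} maximal cyclic subgroup of $G_0$ through $C$, and likewise of $G_3$; call these $T_0,T_3\cong C_{\aqp}$. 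In $G=\Ree(q'^3)$, where $\mathrm{ord}_p(q'^3)=2$, each such $p$ divides $q+1$ but neither $q-1$ nor $\aq\bq=\Phi_{18}(q')$ (using $\gcd(\Phi_6(q'),\Phi_{18}(q'))\mid 3$ and $3\nmid\Phi_6(q')$), which confines $C$ to a unique conjugate $T_G$ of the maximal cyclic subgroup $A_1=C_{(q+1)/4}$, namely the $2'$‑part $O_{2'}(C_G(C))$. Since $T_0$ and $T_3$ are odd‑order cyclic groups containing and centralising $C$, both lie in that $2'$‑part, so $T_0=T_3\le T_G$; call this common subgroup $T$. Then $T\le G_0\cap G_3=G_{03}\cong D_{2t}$, and as a cyclic subgroup of $D_{2t}$ has order dividing $t$ this gives $\aqp\mid t$, hence $t=\aqp$. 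So $G_{03}\cong D_{2\aqp}$ with rotation subgroup $T$.

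Next I would derive the contradiction by computing $C_G(G_{03})$ inside the maximal subgroup $M:=N_G(T)=N_G(A_1)\cong (C_2^2\times D_{(q+1)/2}):C_3$ of $G$ (which contains $G_{03}$). Since $3\nmid|G_{03}|=2\aqp$, one has $G_{03}\le C_2^2\times D_{(q+1)/2}$, with $T\le C_{(q+1)/2}$. Therefore the central $C_2^2$‑factor of $M$ centralises $G_{03}$, and so does the unique involution of $C_{(q+1)/2}$ — it is the centre of $D_{(q+1)/2}$, because $(q+1)/2$ is even — and together they generate an elementary abelian $E\cong C_2^3\le C_G(G_{03})$. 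On the other hand $C_G(G_{03})\cap G_{03}=Z(D_{2\aqp})=1$ since $\aqp$ is odd, so for any reflection $\rho\in G_{03}$ we get $\langle E,\rho\rangle=E\times\langle\rho\rangle\cong C_2^4\le G=\Ree(q'^3)$. But an involution of $\Ree(Q)$ has centraliser $C_2\times\PSL_2(Q)$, and $\PSL_2(Q)$ has Klein‑four Sylow $2$‑subgroups whenever $Q\equiv 3\bmod 8$; hence $\Ree(Q)$ has $2$‑rank $3$ for every $Q=3^{2e+1}$, so it contains no $C_2^4$. This contradiction shows that no small Ree group $\Ree(q)$ is a string C‑group of rank $4$; combined with Lemma~\ref{lemma1} and the fact that $\Ree(3)\cong\PGammaL_2(8)$ is not generated by involutions, this finishes the rank‑$4$ case.

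The step I expect to be the main obstacle is the second one: tracking the cyclic group $C$ and its normaliser/centraliser simultaneously in $\Ree(q')$ (twice, for $G_0$ and $G_3$) and in $\Ree(q'^3)$, and extracting cleanly the primitive‑prime‑divisor facts that (i) make the torus $C_{\aqp}$ the unique maximal cyclic subgroup through $C$ in each Ree subgroup, (ii) make $C$ lie in a unique conjugate of $A_1$ inside $G$, and (iii) force the torus of $G_0$ and the torus of $G_3$ through $C$ to be \emph{literally the same} subgroup of $G$ — that last point being exactly what pushes the whole of $C_{\aqp}$ into the small intersection $G_{03}$ and collapses it to $D_{2\aqp}$. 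Once $G_{03}\cong D_{2\aqp}$ (resp.\ $D_{2\bqp}$) is in hand, the $2$‑rank argument is immediate.
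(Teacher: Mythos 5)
Your reduction to $t=\aqp$ (or $\bqp$) via primitive prime divisors is sound: every prime divisor of $\aqp\bqp=q'^2-q'+1$ is indeed coprime to $6$, has order $6$ modulo $q'$ and order $2$ modulo $q=q'^3$, so the odd part of $C_G(C)$ is a single cyclic group containing both tori $T_0\le G_0$ and $T_3\le G_3$, forcing $T_0=T_3\le G_{03}$ and hence $G_{03}\cong D_{2\aqp}$. This is a genuinely different (and more elaborate) route than the paper takes — the paper never needs to pin down $t$ exactly.

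The final step, however, contains a genuine error, and it is exactly where the contradiction was supposed to come from. In the paper's notation $D_{2k}$ has order $2k$, so the factor $D_{(q+1)/2}$ of $M=N_G(A_1)\cong(C_2^2\times D_{(q+1)/2}){:}C_3$ has order $(q+1)/2$ and rotation subgroup $C_{(q+1)/4}$ of \emph{odd} order (recall $q+1\equiv 4\bmod 8$); its centre is trivial, and there is no ``unique involution of $C_{(q+1)/2}$'' to adjoin. If instead you use the other description $C_2^2\times D_{(q+1)/2}\cong C_2\times D_{q+1}$, the central involution of the rotation subgroup $C_{(q+1)/2}\le D_{q+1}$ is precisely one of the three involutions of the $C_2^2$ direct factor under that isomorphism, so you are counting it twice. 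A direct computation gives $C_G(G_{03})=C_{C_G(T)}(\rho_1)=C_2^2$ (the reflections of $G_{03}$ invert all of $C_{(q+1)/4}$ and centralise only the $C_2^2$), so your group $\langle E,\rho\rangle$ is $C_2^3$, which is a Sylow $2$-subgroup of $\Ree(q)$ and yields no contradiction with the $2$-rank bound. Indeed no $2$-local obstruction is available here: the paper's actual contradiction is with the \emph{intersection property}, obtained by showing that the subgroup $H=C_t{:}C_6$ of $G_0$ containing $G_{03}$ satisfies $N_G(H)=N_G(G_{03})$ and $G_{03}\trianglelefteq H$, so that every conjugate of $\Ree(q')$ containing $G_{03}$ (in particular $G_3$) must contain $H$, whence $G_0\cap G_3\ge H>G_{03}$. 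The extra element detected there has order $3$, not $2$, which is consistent with the failure of your $C_2^4$ argument. To repair your proof you would need to show that some element outside $D_{2\aqp}$ — for instance the $C_3$ normalising $T$ — lies in both $G_0$ and $G_3$, which is essentially the paper's argument.
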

\begin{proof}
Suppose $G$ is a Ree group that has a string C-group representation of rank $4$. By Lemma~\ref{lemma3a} and part (2) of Lemma~\ref{newLemma1} we may assume that $G:=\Ree(q)$ where $q=q'^3$ with $q'=3^{m}$ for an odd integer $m$. Moreover, $G_0$ and $G_3$ are conjugate simple Ree subgroups isomorphic to $\Ree(q')$.
By part (3) of Lemma~\ref{newLemma1}, if $G_{03}=D_{2t}$ then $t$ must be a divisor of either $\aqp$ or $\bqp$, and since $q = q'^3$, we also have 
\[ q+1 = (q'+1)(q'^2-q'+1) = (q'+1)\aqp\bqp.\] 
Thus $t$ is also a divisor of $q+1$. 
We claim that then $G_0 \cap G_3 > G_{03}$, which gives a contradiction to the intersection property.
Indeed, since $G_{03}$ lies in a subgroup $H:=C_{t}:C_6$ of $G_0$, and the normaliser of $G_{03}$ is not contained in $G_0$ (for otherwise, $D_{2t}$ would have to lie in a unique subgroup $\Ree(q')$, whereas already $G_0$ and $G_3$ give two examples of such subgroups, by the previous lemma), we have $N_G(G_{03}) = (E_4\times D_{2t}) : C_3$. This group contains $H= C_t:C_6 \cong D_{2t} : C_3 $ as a normal subgroup, and $G_{03}$ is normal in $H$.
We also have that $N_G(H) = N_G(G_{03})$. But then, as $G_{03}$ is normal in $H$, any subgroup $R(q')$ containing $G_{03}$ must contain $H$. In particular this applies to $G_3$. Thus $G_0 \cap G_3 \geq H > G_{03}$, and the intersection property fails.

\end{proof}

\subsection{String C-groups of rank $3$}

It remains to investigate the possibility of representing $\Ree(q)$ as a string C-group of rank $3$. The following lemma gives an affirmative answer and completes the proof of Theorem~\ref{maintheo}.

\begin{lemma}~\label{lemma4}
Let $G=\Ree(q)$, with $q\neq 3$ an odd power of 3. Then there exists a triple of involutions $S := \{\rho_0,\rho_1,\rho_2\}$ in $G$ such that $(G,S)$ is a string C-group.
\end{lemma}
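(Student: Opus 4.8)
It is well known that in rank~$3$ a set $S=\{\rho_0,\rho_1,\rho_2\}$ of pairwise distinct involutions of $G$ with $(\rho_0\rho_2)^2=1$ and $\langle S\rangle=G$ is a string C-group as soon as the single intersection relation $\langle\rho_0,\rho_1\rangle\cap\langle\rho_1,\rho_2\rangle=\langle\rho_1\rangle$ holds, so it suffices to produce one such triple in $G=\Ree(q)$. The plan is to look for $\rho_0,\rho_1,\rho_2$ with
\[ \langle\rho_0,\rho_2\rangle\cong E_4,\qquad \langle\rho_0,\rho_1\rangle\cong D_{2\alpha_q},\qquad \langle\rho_1,\rho_2\rangle\cong D_{2\beta_q}, \]
where $\alpha_q=q+1-3^{e+1}$ and $\beta_q=q+1+3^{e+1}$, so that $\alpha_q\beta_q=q^2-q+1$ and both $\alpha_q,\beta_q$ are odd and at least $19$. (If these precise orders turn out to be awkward, coprime divisors $>2$ of $\alpha_q$ and of $\beta_q$ serve equally well in what follows.)

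Granting such a triple, the three defining properties are cheap, and I would dispatch them first. For the intersection relation, a common subgroup $H$ of $D_{2\alpha_q}$ and $D_{2\beta_q}$ is cyclic or dihedral: a dihedral $D_{2m}$ with $m\ge2$ cannot embed, since $m=2$ is impossible ($D_{2\alpha_q}$ has cyclic Sylow $2$-subgroup, $\alpha_q$ being odd) and $m>2$ would give $m\mid\gcd(\alpha_q,\beta_q)=1$; and a cyclic $H$ has $|H|\mid\gcd(2\alpha_q,2\beta_q)=2$. Hence $|H|\le2$, and $\rho_1\in H$ forces $H=\langle\rho_1\rangle$. For $G=\langle\rho_0,\rho_1,\rho_2\rangle$, the subgroup generated contains both an $E_4$ and a $D_{2\beta_q}$, so it is enough to observe that no maximal subgroup of $\Ree(q)$, from the list in Section~\ref{reebasics}, contains both: $N_G(A_2)$ and $N_G(A_3)$ have cyclic Sylow $2$-subgroups, hence no $E_4$; $N_G(A)$, $C_G(\rho)$ and $N_G(A_1)$ have orders $q^3(q-1)$, $q(q^2-1)$ and $6(q+1)$, none divisible by $\beta_q$ (because $\beta_q\mid q^2-q+1$, which is coprime to $q$, to $q-1$, to $q+1$ and to $6$); and a Ree subgroup $\Ree(q')$ with $q=(q')^p$, $p$ an odd prime, has $q'\le q^{1/3}$ and so contains no $D_{2\beta_q}$, since the half-order of any of its dihedral subgroups divides one of $9,(q')-1,(q')+1,\alpha_{q'},\beta_{q'}$, each of which is either coprime to $\beta_q$ or strictly smaller than $q<\beta_q$. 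Thus the three involutions generate $G$; they are pairwise distinct because their pairwise products have the distinct nontrivial orders $2,\alpha_q,\beta_q$; $(\rho_0\rho_2)^2=1$ holds by construction; and since $\alpha_q,\beta_q\ge3$ the associated polyhedron is non-degenerate, of type $\{\alpha_q,\beta_q\}$.

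The substantive step, and the one I expect to be the main obstacle, is constructing the triple itself; here one must use the internal geometry of $\Ree(q)$, not merely its maximal-subgroup lattice. I would attempt a counting argument. Fix an involution $\rho_1$. Using that $G$ has a single class of involutions, that an element $g$ of order $\alpha_q$ generates a maximal torus $C_{\alpha_q}$ with normalizer $N_G(A_2)\cong C_{\alpha_q}:C_6$, and that (by the observation made just before Lemma~\ref{lemma1}) every involution of $N_G(A_2)$ lies in the unique dihedral subgroup $D_{2\alpha_q}$ above $\langle g\rangle$ and inverts $g$, together with the bijection $\rho\mapsto\rho_1\rho$ between involutions $\rho$ with $|\rho_1\rho|=\alpha_q$ and the order-$\alpha_q$ elements of $G$ inverted by $\rho_1$, one computes that $X:=\{\rho\text{ an involution}:|\rho_1\rho|=\alpha_q\}$ has size $\tfrac16 q(q^2-1)\varphi(\alpha_q)$, and likewise $Y:=\{\rho:|\rho_1\rho|=\beta_q\}$ has size $\tfrac16 q(q^2-1)\varphi(\beta_q)$. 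One then wants $\rho_0\in X$ and $\rho_2\in Y$ with $[\rho_0,\rho_2]=1$; since the involutions commuting with a given $\rho_0$ are exactly the $q(q-1)$ involutions other than $\rho_0$ in $C_G(\rho_0)\cong C_2\times\PSL_2(q)$, the task reduces to showing that the commuting pairs in $X\times Y$ are not all excluded, which I would obtain from a double count that controls how $Y$ distributes over the various involution centralizers, valid for every $q=3^{2e+1}>3$. A possibly cleaner alternative is geometric: one fixes the Klein four-group $V=\langle\rho_0,\rho_2\rangle$, whose normalizer is the maximal subgroup $N_G(A_1)$ and whose three nontrivial elements have pairwise disjoint fixed blocks in the Ree unital $S(2,q+1,q^3+1)$, and then chooses the fixed block of $\rho_1$ in sufficiently general position relative to these three blocks to force simultaneously $\rho_1\notin N_G(V)$, $|\rho_0\rho_1|=\alpha_q$ and $|\rho_1\rho_2|=\beta_q$; a third option is to write $\rho_0,\rho_1,\rho_2$ down explicitly and uniformly in $q$ inside a concrete model of $\Ree(q)$, such as its $2$-transitive action on $q^3+1$ points or its $7$-dimensional representation. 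In every case the triple, once secured, together with the verification above yields the string C-group, which completes the proof of Lemma~\ref{lemma4} and, with the preceding lemmas, of Theorem~\ref{maintheo}.
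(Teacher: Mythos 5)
Your reduction of the problem to finding a single suitable triple is sound: the rank-3 criterion you invoke is correct, your coprimality argument for the intersection property works (indeed $\gcd(\aq,\bq)=1$ since $\aq\bq=q^2-q+1$ and any common divisor would divide $\bq-\aq=2\cdot 3^{e+1}$, which is coprime to $q^2-q+1$), and your maximal-subgroup analysis showing that no proper subgroup contains both an $E_4$ and a $D_{2\bq}$ is a legitimate, if different, way to get generation. But the proof has a genuine gap exactly where you flag it: you never actually establish that a triple of the required kind exists. The existence of commuting involutions $\rho_0,\rho_2$ with $|\rho_0\rho_1|=\aq$ and $|\rho_1\rho_2|=\bq$ for a common $\rho_1$ is the entire content of the lemma, and "a double count that controls how $Y$ distributes over the various involution centralizers" is a plan, not an argument; the three alternative strategies you list are likewise unexecuted. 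Since the lemma is an existence statement, what remains unproved is precisely the statement itself.

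For comparison, the paper sidesteps the counting problem by working in the Ree unital $S(2,q+1,q^3+1)$ and choosing a triple of a different Schl\"afli type: $\rho_0,\rho_1$ are taken inside a maximal subgroup $N_G(A_3)\cong C_{\bq}:C_6$ with $|\rho_0\rho_1|=\bq$, and $\rho_2$ is chosen in $C_G(\rho_0)$ so that its block of fixed points meets that of $\rho_1$ in a point; then $\langle\rho_1,\rho_2\rangle$ lands in a point stabilizer $A:C_{q-1}$, forcing $|\rho_1\rho_2|$ to be a power of $3$, which is automatically coprime to $\bq$. Generation there comes cheaply from the fact that $\langle\rho_0,\rho_1\rangle$ has index $3$ in the maximal subgroup $N_G(A_3)$ while $\rho_2\notin N_G(A_3)$. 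If you want to salvage your version, you would need to either carry out the double count in full or switch to a construction of this more geometric kind, where the existence of $\rho_2$ reduces to an incidence statement about blocks rather than a global count over conjugacy classes.
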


\begin{proof}
Recall that the fixed point set of an involution in $G$ is a block of the Steiner system ${\cal S} := S(2,q+1,q^3+1)$. Pick two involutions $\rho_0,\rho_1$ from a maximal subgroup $M$ of $G$ of type $N_{G}(A_3)$ such that $\rho_0\rho_1$ has order $q+1+3^{e+1}$, and let $B_0,B_1$, respectively, denote their blocks of fixed points. Obviously, $B_{0}\cap B_{1} = \emptyset$, for otherwise $\langle \rho_0,\rho_1\rangle$ would lie in the stabilizer of a point in $B_{0}\cap B_{1}$, which is not possible because of the order of $\rho_0\rho_1$. Recall here that the point stabilizers are maximal subgroups of the form $N_{G}(A)=A:C_{q-1}$, where $A$ is a $3$-Sylow subgroup of $G$. Now choose an involution $\rho_2$ in $C_G(\rho_0)$ distinct from $\rho_0$ such that its block of fixed points $B_2$ meets $B_{1}$ in a point. Then $B_{1}\cap B_{2}$ must consist of a single point $p$ (say), and $B_{0} \cap B_{2} = \emptyset$ since the stabilizer of a point does not contain Klein 4-groups. Then $\langle \rho_1,\rho_2 \rangle$ lies in the point stabilizer of $p$, and hence must a dihedral group $D_{2n}$, with $n$ a power of 3. As $\langle\rho_{0},\rho_{1}\rangle$ is a subgroup of index $3$ in $M$, and $\rho_0$ does not belong to $M$, we see that $\langle\rho_0,\rho_1,\rho_2\rangle = G$. Moreover, since the orders of $\rho_0\rho_1$ and $\rho_1\rho_2$ are coprime, the intersection property must hold as well. Thus $(G,S)$, with $S:=\{\rho_0,\rho_1,\rho_2\}$, is a string C-group of rank~$3$.
\end{proof}

We have not attempted to enumerate or classify all representations of $\Ree(q)$ as a string $C$-group of rank $3$.

\section{Acknowledgements}
This research was sponsored by a Marsden Grant (12-UOA-083) of the Royal Society of New Zealand.

\bibliographystyle{plain}

\end{document}